\numberwithin{equation}{section}
\newtheorem{theorem}{Theorem}[section]
\newtheorem{lemma}[theorem]{Lemma}
\newtheorem{prop}[theorem]{Proposition}
\newtheorem{corollary}[theorem]{Corollary}
\theoremstyle{definition}
\newtheorem{example}[theorem]{Example}
\theoremstyle{remark}
\newtheorem{remark}[theorem]{Remark}
\numberwithin{equation}{section}
\DeclareMathOperator{\Aut}{\mathrm{Aut}(\Delta)}
\def\la{\lambda}
\def\norm#1{\lVert#1\rVert}
\def\fs{\Psi}
\def\GL{\mathrm{GL}}
\markboth{\today}{\today}
\begin{document}

\title[Witten multiple zeta-functions]{On Witten multiple zeta-functions associated with semisimple Lie algebras V}
\author{Yasushi Komori}
\address{Department of Mathematics, Rikkyo University, Nishi-Ikebukuro, Toshima-ku, Tokyo 171-8501, Japan}
\email{komori@rikkyo.ac.jp}

\author{Kohji Matsumoto}
\address{Graduate School of Mathematics, Nagoya University, Chikusa-ku, Nagoya 464-8602 Japan}
\email{kohjimat@math.nagoya-u.ac.jp}

\author{Hirofumi Tsumura}
\address{Department of Mathematics and Information Sciences, Tokyo Metropolitan University, 1-1, Minami-Ohsawa, Hachioji, Tokyo 192-0397 Japan}
\email{tsumura@tmu.ac.jp}

\date{}
\subjclass[2000]{Primary 11M41; Secondary 17B20, 40B05}

\maketitle

\begin{abstract}
We study the values of the zeta-function of the root system of type $G_2$ at positive
integer points.   In our previous work we considered the case when all integers are
even, but in the present paper we prove several theorems which include the situation
when some of the integers are odd.   The underlying reason why we may treat such cases
including odd integers is also discussed.
\end{abstract}

\baselineskip 19pt

\section{Introduction} \label{sec-1}

Let $\mathbb{N}$ be the set of positive integers, $\mathbb{N}_0=\mathbb{N}\cup\{0\}$,
$\mathbb{Z}$ the ring of rational integers, $\mathbb{Q}$ the rational number
field, $\mathbb{R}$ the real number field, and $\mathbb{C}$ the complex number
field.

The present paper is the continuation of our series of papers \cite{MTF,KM2,KM3,KM4}
(and also \cite{KMT,KMTpja,KMT-L,KMTJC}), in which
we have developed the theory of zeta-functions of root systems.
Motivated by the work of Witten \cite{Wi} in quantum gauge theory, Zagier \cite{Za}
defined the Witten zeta-function
$$
\zeta_W(s,{\frak g})=\sum_{\varphi}(\dim\varphi)^{-s}
$$
associated with any complex semisimple Lie algebra ${\frak g}$, where the sum
runs over all finite-dimensional irreducible representations $\varphi$ of ${\frak g}$.
The notion of zeta-functions of root systems was introduced as a multi-variable
generalization of Witten zeta-functions.   We will give the rigorous definition of
the zeta-function of the root system $\Delta$ in the next section, which we will
denote by $\zeta_r(\mathbf{s},\mathbf{y};\Delta)$, where $r$ is the rank of
$\Delta$.    By Weyl's dimension formula, it is possible to obtain the explicit
form of $\zeta_r(\mathbf{s},\mathbf{y};\Delta)$.   For example, when the root
system is of type $G_2$ and $\mathbf{y}=\mathbf{0}$, then $r=2$,
$\mathbf{s}=(s_1,s_2,s_3,s_4,s_5,s_6)\in\mathbb{C}^6$, and
\begin{align}
&\zeta_{2}(\mathbf{s};G_2)=\zeta_{2}(\mathbf{s},\mathbf{0};G_2)\notag\\
&=\sum_{m,n\geq 1}\frac{1}{m^{s_1}n^{s_2}(m+n)^{s_3}(m+2n)^{s_4}(m+3n)^{s_5}
(2m+3n)^{s_6}}. \label{G2-zeta} 
\end{align}

In our former papers, besides the general theory, we studied several individual
cases of low rank.   Zeta-functions of root systems of type $A_r$ ($r=2,3$) were 
studied in \cite{MTF,KMT,KM3}, and those of type $B_r$, $C_r$ ($r=2,3$) were
studied in \cite{KM2,KM3,KMTJC}.   Then in \cite{KM4}, the zeta-function of the
root system of type $G_2$, the simplest exceptional algebra, was discussed.

The main topic in \cite{KM4} is the situation when $s_1,\ldots,s_6$ are positive
even integers.
From our general result given in \cite[Theorem 8]{KM3}, it is possible to show that 
\begin{equation}
\zeta_{2}(2a,2b,2b,2b,2a,2a;G_2)\in \mathbb{Q}\cdot\pi^{6(a+b)}\qquad 
(a,b\in \mathbb{N}). \label{G2-Witten}
\end{equation}
Moreover the rational coefficients can be explicitly determined.
In \cite{KM4}, using the idea developed in \cite{KMTJC}, we proved certain 
functional relations (\cite[Theorem 5.1]{KM4}) which include \eqref{G2-Witten}
as special cases.

However, it is possible to treat the case when some of $s_1,\ldots,s_6$ are odd
integers.   
Zhao \cite{Zhao} expressed the values $\zeta_2({\bf k};G_2)$ for
${\bf k}\in \mathbb{N}_0^6$ (under certain conditions) in terms of double 
polylogarithms.   Using his formula, Zhao calculated numerically some of those 
special values, for example
\begin{align}\label{Zhao-num}
\zeta_2(2,1,1,1,1,1;G_2) & =  0.0099527234\cdots.                                       
\end{align}
The parity result for $\zeta_2({\bf k};G_2)$ in some extended sense has been
shown by Okamoto \cite{Ok}.   We will discuss his result more closely 
in the last section of
the present paper. 

In the present paper we also study the situation when some of $s_1,\ldots,s_6$ are 
odd integers.
In Section \ref{sec-2}, after preparing the basic notations and definitions, 
we will prove a general theorem (Theorem \ref{thm:reduction}), which gives the 
underlying reason why sometimes it is possible to evaluate the values of multiple
zeta-functions at odd integer points.
In Section \ref{sec-2.5} we will apply Theorem \ref{thm:reduction} to
$\zeta_2(\mathbf{s};G_2)$.
Sections \ref{sec-2.75} to \ref{sec-4} are devoted to the proof of functional relations
among $\zeta_2(\mathbf{s};G_2)$, the Riemann zeta-function $\zeta(s)$, and a certain
Dirichlet $L$-function.   Those functional relations especially imply explicit
evaluations of special values of $\zeta_2(\mathbf{s};G_2)$, such as
\begin{equation}                                                                        
\zeta_2(2,1,1,1,1,1;G_2) = \frac{1}{18}\zeta(2)\zeta(5)-\frac{109}{1296}\zeta(7), 
\label{Zhao-1}                                                                          
\end{equation}
(see Example \ref{E-3-3}), which we announced in \cite{KM4}.
Our result \eqref{Zhao-1} agrees with Zhao's numerical value \eqref{Zhao-num}.


\ 

\section{A general formula} \label{sec-2}

We use the same notation as in \cite{KMT,KM2,KM3,KM4}. 
We first recall several basic definitions and facts about root systems and Weyl groups
(for the details, see \cite{Hum,Hum72,Bourbaki}). 

Let $V$ be an $r$-dimensional real vector space equipped with an inner product $\langle \cdot,\cdot\rangle$.
The norm $\norm{\cdot}$ is defined by $\norm{v}=\langle v,v\rangle^{1/2}$.
The dual space $V^*$ is identified with $V$ via the inner product of $V$.
Let $\Delta$ be a finite reduced root system in $V$ which may not be irreducible, and
$\fs=\{\alpha_1,\ldots,\alpha_r\}$ its fundamental system.
We fix 
$\Delta_+$ and $\Delta_-$ as the set of all positive roots and negative roots respectively.
Then we have a decomposition of the root system $\Delta=\Delta_+\coprod\Delta_-$ .
Let $Q=Q(\Delta)$ be the root lattice, $Q^\vee$ the coroot lattice,
$P=P(\Delta)$ the weight lattice, $P^\vee$ the coweight lattice,
$P_+$ the set of integral dominant weights 
and
$P_{++}$ the set of integral strongly dominant weights
respectively defined by
\begin{align}
&  Q=\bigoplus_{i=1}^r\mathbb{Z}\,\alpha_i,\qquad
  Q^\vee=\bigoplus_{i=1}^r\mathbb{Z}\,\alpha^\vee_i,\\
& P=\bigoplus_{i=1}^r\mathbb{Z}\,\lambda_i, \qquad 
  P^\vee=\bigoplus_{i=1}^r\mathbb{Z}\,\lambda^\vee_i,\\
&   P_+=\bigoplus_{i=1}^r\mathbb{N}_0\,\lambda_i, \qquad 
  P_{++}=\bigoplus_{i=1}^r\mathbb{N}\,\lambda_i,
\end{align}
where the fundamental weights $\{\lambda_j\}_{j=1}^r$
and
the fundamental coweights $\{\lambda_j^\vee\}_{j=1}^r$
are the dual bases of $\fs^\vee$ and $\fs$
satisfying $\langle \alpha_i^\vee,\lambda_j\rangle=\delta_{ij}$ and $\langle \lambda_i^\vee,\alpha_j\rangle=\delta_{ij}$ respectively.
Let
\begin{equation}                                                                        
\rho=\frac{1}{2}\sum_{\alpha\in\Delta_+}\alpha=\sum_{j=1}^r\lambda_j                    
\label{rho-def}                                                                         
\end{equation}
be the lowest strongly dominant weight.
Then $P_{++}=P_++\rho$.

Let $\sigma_\alpha$ be the reflection with respect to a root $\alpha\in\Delta$ 
defined as
\begin{equation}                                                                        
 \sigma_\alpha:V\to V, \qquad 
\sigma_\alpha:v\mapsto v-\langle \alpha^\vee,v\rangle\alpha.
\end{equation}
For a subset $A\subset\Delta$, let
$W(A)$ be the group generated by reflections $\sigma_\alpha$ for all $\alpha\in A$. 
In particular, $W=W(\Delta)$ is the Weyl group, and
$\{\sigma_j(=\sigma_{\alpha_j})\,|\,1\leq j \leq r\}$ generates $W$.
For $w\in W$, denote
$\Delta_w=\Delta_+\cap w^{-1}\Delta_-$.

Let $\Aut$ be the subgroup of all the automorphisms $\GL(V)$ which
stabilizes $\Delta$.
Then the Weyl group $W$ is a normal subgroup of $\Aut$ and
there exists a subgroup $\Omega\subset \Aut$ such that
$\Aut=\Omega\ltimes W$.
The subgroup $\Omega$ is isomorphic to the group $\mathrm{Aut}(\Gamma)$ of automorphisms
of the Dynkin diagram $\Gamma$
(see \cite[\S 12.2]{Hum72}).

Now we can define the zeta-function of the root system $\Delta$.
For $\mathbf{s}=(s_\alpha)_{\alpha\in\Delta_+}\in\mathbb{C}^{|\Delta_+|}$
and $\mathbf{y}\in V$, it is defined by
\begin{align}                                                                          
  \label{eq:def_LZ}                                                                     
  \zeta_r(\mathbf{s},\mathbf{y};\Delta)=\sum_{\lambda\in P_{++}}                        
  e^{2\pi \sqrt{-1}\langle \mathbf{y},\lambda\rangle}                                   
  \prod_{\alpha\in\Delta_+}                                                             
  \frac{1}{\langle\alpha^\vee,\lambda\rangle^{s_\alpha}}.                               
\end{align}                                          
Let
$$
\mathcal{S}=\{\mathbf{s}=(s_{\alpha})\in\mathbb{C}^{|\Delta_+|}\;|\;
\Re s_{\alpha}>1\;{\rm for}\;\alpha\in\Delta_+\}.
$$
Then $\zeta_r(\mathbf{s},\mathbf{y};\Delta)$ is absolutely convergent in the region
$\mathcal{S}$ and is holomorphic there (\cite[Lemma 9]{KM3}).

Next, let $I\subset \{1,\ldots,r\}$ with $I\neq \emptyset$, and define a certain
linear combination $S(\mathbf{s},\mathbf{y};I;\Delta)$ of the zeta-function
associated with $I$.
Let $\fs_I=\{\alpha_i~|~i\in I\}\subset\fs$
and let $V_I$ be the linear subspace spanned by $\fs_I$.
Then $\Delta_I=\Delta\cap V_I$
is a root system in $V_I$ whose fundamental system is $\fs_I$. 
For the root system $\Delta_I$, 
we denote the corresponding
coroot lattice
 by $Q_I^\vee=\bigoplus_{i\in I}\mathbb{Z}\,\alpha_i^\vee$.
Let
\begin{gather}
  P_I=\bigoplus_{i\in I}\mathbb{Z}\,\lambda_i,\\
  P_{I+}=\bigoplus_{i\in I}\mathbb{N}_0\,\lambda_i.
\end{gather}

The natural embedding $\iota:Q_I^\vee\to Q^\vee$ induces the projection
$\iota^*:P\to P_I$. Namely for $\lambda\in P$,
$\iota^*(\lambda)$ is defined as a unique element of $P_I$ satisfying
 $\langle \iota(q),\lambda\rangle=\langle q,\iota^*(\lambda)\rangle$ for all $q\in Q_I^\vee$.
Let $W_I$ be the subgroup of $W$ generated by all the reflections associated with
the elements in $\Psi_I$, and
\begin{equation}
\label{eq:def_W_I}
  W^I=\{w\in W~|~\Delta^\vee_{I+}\subset w\Delta^\vee_+\}.
\end{equation}

For $\mathbf{s}=(s_\alpha)_{\alpha\in\Delta_+}\in\mathbb{C}^{|\Delta_+|}$,
we define an action of $\Aut$ by
\begin{equation}
  \label{eq:A_act_on_s}
  (w\mathbf{s})_\alpha=s_{w^{-1}\alpha},
\end{equation}
where we have set $s_{-\alpha}=s_\alpha$.
Now define
\begin{align}
  \label{eq:def_S}
  S(\mathbf{s},\mathbf{y};I;\Delta)
  =\sum_{\lambda\in \iota^{*-1}(P_{I+})\setminus H_{\Delta^\vee}}
  e^{2\pi \sqrt{-1}\langle \mathbf{y},\lambda\rangle}
  \prod_{\alpha\in\Delta_+}
  \frac{1}{\langle\alpha^\vee,\lambda\rangle^{s_\alpha}},
\end{align}
where $H_{\Delta^\vee}=\{v\in V~|~\langle \alpha^\vee,v\rangle=0\text{
  for some }\alpha\in\Delta\}$ is the set of all walls of Weyl
chambers. 
By \cite[Theorem 5]{KM3},
for $\mathbf{s}\in\mathcal{S}$ and $\mathbf{y}\in V$,
we have
\begin{equation}
  \label{eq:func_eq}
  S(\mathbf{s},\mathbf{y};I;\Delta)
  =
  \sum_{v\in W^I}
  \Bigl(\prod_{\alpha\in\Delta_{v^{-1}}}(-1)^{-s_{\alpha}}\Bigr)
  \zeta_r(v^{-1}\mathbf{s},v^{-1}\mathbf{y};\Delta).
\end{equation}

The following theorem implies that under a certain condition,
the number of terms on the right-hand side of \eqref{eq:func_eq} can be reduced.
\begin{theorem}
\label{thm:reduction}
  Assume that there exist $w_1\in\Aut$, $\mathbf{s}\in\mathcal{S}$ and
$\mathbf{y}\in V$ which satisfy the conditions that
  $s_\alpha\in\mathbb{Z}$ for $\alpha\in\Delta_{w_1^{-1}}$, 
  \begin{equation}
    \label{eq:as1} 
    \Bigl(\prod_{\alpha\in\Delta_{w_1^{-1}}}(-1)^{s_{\alpha}}\Bigr)=-1
  \end{equation}
  and 
  \begin{equation}
    \label{eq:as2} 
    w_1^{-1}\mathbf{s}=\mathbf{s},\qquad w_1^{-1}\mathbf{y}=\mathbf{y}.
  \end{equation}
  Then we have
  \begin{multline}\label{eq:expand0}
    S(\mathbf{s},\mathbf{y};I;\Delta)
    =
    \frac{1}{2}\Bigl(\sum_{v\in W^I \setminus w_1 W^I}
    \Bigl(\prod_{\alpha\in\Delta_{v^{-1}}}(-1)^{-s_{\alpha}}\Bigr)
    \zeta_r(v^{-1}\mathbf{s},v^{-1}\mathbf{y};\Delta) \\
    +
    \sum_{v\in W^I\setminus w_1^{-1}W^I}
    \Bigl(\prod_{\alpha\in\Delta_{v^{-1}}}(-1)^{-s_{\alpha}}\Bigr)
    \zeta_r(v^{-1}\mathbf{s},v^{-1}\mathbf{y};\Delta)\Bigr).
  \end{multline}
  Furthermore if $w_1^{-1} W^I=w_1 W^I$, then
  \begin{equation}
    \label{eq:expand1}
    S(\mathbf{s},\mathbf{y};I;\Delta)=
    \sum_{v\in W^I \setminus w_1 W^I}    
    \Bigl(\prod_{\alpha\in\Delta_{v^{-1}}}(-1)^{-s_{\alpha}}\Bigr)
    \zeta_r(v^{-1}\mathbf{s},v^{-1}\mathbf{y};\Delta).
  \end{equation}
\end{theorem}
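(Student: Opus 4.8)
The plan is to start from the expansion \eqref{eq:func_eq} of $S(\mathbf{s},\mathbf{y};I;\Delta)$ as a sum over $W^I$, and exploit the symmetry coming from $w_1$ to fold the sum onto itself. The key observation is that, under the hypotheses \eqref{eq:as1} and \eqref{eq:as2}, the map $v\mapsto w_1 v$ sends a summand indexed by $v$ to (the negative of) the summand indexed by $w_1 v$: indeed \eqref{eq:as2} guarantees $(w_1 v)^{-1}\mathbf{s}=v^{-1}w_1^{-1}\mathbf{s}=v^{-1}\mathbf{s}$ and likewise for $\mathbf{y}$, so the zeta-value attached to $w_1 v$ equals that attached to $v$; then the sign factors differ by exactly $\prod_{\alpha\in\Delta_{w_1^{-1}}}(-1)^{s_\alpha}=-1$ (a cocycle-type identity for the sets $\Delta_{(w_1 v)^{-1}}$ versus $\Delta_{v^{-1}}$ and $\Delta_{w_1^{-1}}$, together with $s_{-\alpha}=s_\alpha$ and integrality of $s_\alpha$ on $\Delta_{w_1^{-1}}$). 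First I would verify carefully this sign/zeta-value matching, which is the technical heart of the argument.

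Next, I would observe that $W^I$ need not be stable under left multiplication by $w_1$, so one cannot directly pair $v$ with $w_1 v$ inside $W^I$. To handle this I would split $W^I$ into the part contained in $w_1 W^I$ and its complement, and symmetrically into the part contained in $w_1^{-1} W^I$ and its complement, and write the single sum \eqref{eq:func_eq} as the average of two copies of itself. In one copy I replace the summation variable $v$ by $w_1 v$ over the subset where this stays in $W^I$ (equivalently $v\in W^I\cap w_1^{-1}W^I$); by the matching just established, the contributions from $W^I\cap w_1 W^I$ (in the first copy, after the substitution) and from $W^I\cap w_1^{-1}W^I$ (in the second copy) cancel against each other, leaving exactly the two sums over $W^I\setminus w_1 W^I$ and $W^I\setminus w_1^{-1}W^I$ with the factor $\tfrac12$. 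This yields \eqref{eq:expand0}.

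Finally, for the last assertion I would simply note that if $w_1^{-1}W^I=w_1 W^I$ then the two complements $W^I\setminus w_1 W^I$ and $W^I\setminus w_1^{-1}W^I$ coincide, and moreover the substitution $v\mapsto w_1 v$ is an involution on this common complement (since $w_1^2 W^I = w_1(w_1 W^I)=w_1(w_1^{-1}W^I)=W^I$, so $w_1$ maps $W^I\setminus w_1 W^I$ to itself). Hence the two sums in \eqref{eq:expand0} are equal, the $\tfrac12$ disappears, and we obtain \eqref{eq:expand1}.

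The main obstacle I anticipate is the bookkeeping of signs: one must show that the product $\prod_{\alpha\in\Delta_{(w_1 v)^{-1}}}(-1)^{-s_\alpha}$ relates to $\prod_{\alpha\in\Delta_{v^{-1}}}(-1)^{-s_\alpha}$ precisely through the factor $\prod_{\alpha\in\Delta_{w_1^{-1}}}(-1)^{-s_\alpha}$. This rests on the standard length/root-set identity $\Delta_{(w_1 v)^{-1}} = \Delta_{v^{-1}}\, \triangle\, v^{-1}\Delta_{w_1^{-1}}$ (symmetric difference, up to signs of roots), combined with the $W$-invariance of $\mathbf{s}$ under $w_1^{-1}$ from \eqref{eq:as2} and the convention $s_{-\alpha}=s_\alpha$; the integrality hypothesis on $\Delta_{w_1^{-1}}$ is exactly what makes $(-1)^{s_\alpha}$ well-defined there. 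Once this identity is in hand, everything else is a clean reindexing.
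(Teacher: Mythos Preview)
Your approach is correct and is more direct than the paper's. The paper does not prove the termwise matching $T(w_1 v)=-T(v)$ head-on; instead it first establishes an auxiliary identity (its \eqref{eq:expand2}) expressing $\bigl(\prod_{\alpha\in\Delta_{w^{-1}}}(-1)^{s_\alpha}\bigr) S(w^{-1}\mathbf{s},w^{-1}\mathbf{y};I;\Delta)$, for arbitrary $w\in\Aut$, as a sum over $wW^I$ split into the pieces inside and outside $W^I$. That identity is obtained by returning to the lattice-sum definition \eqref{eq:def_S} and reindexing. The paper then invokes the trivial consequence $2S=S-\bigl(\prod_{\alpha\in\Delta_{w_1^{-1}}}(-1)^{s_\alpha}\bigr) S(w_1^{-1}\mathbf{s},w_1^{-1}\mathbf{y};I;\Delta)$ of the hypotheses, substitutes \eqref{eq:func_eq} and \eqref{eq:expand2}, and cancels the overlap $W^I\cap w_1 W^I$. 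Your route bypasses \eqref{eq:expand2} entirely by folding the single expansion \eqref{eq:func_eq} onto itself via $v\mapsto w_1 v$; the cocycle-type sign identity you isolate is exactly what the paper's auxiliary computation encodes. Your version is shorter and more conceptual, while the paper's avoids having to state and justify the root-set relation for $\Delta_{(w_1 v)^{-1}}$ abstractly.

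Two small corrections. First, the formula $\Delta_{(w_1 v)^{-1}}=\Delta_{v^{-1}}\,\triangle\,v^{-1}\Delta_{w_1^{-1}}$ is not literally right; after substituting $\alpha\mapsto w_1^{-1}\alpha$ (which preserves $s_\alpha$ by \eqref{eq:as2}) the set that appears is $\Delta_{v^{-1}}\,\triangle\,\Delta_{w_1}$, and you then need $s_\beta\in\mathbb{Z}$ for $\beta\in\Delta_{w_1}$, which does follow from the hypothesis on $\Delta_{w_1^{-1}}$ via $w_1\Delta_{w_1}=-\Delta_{w_1^{-1}}$ together with \eqref{eq:as2}. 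Second, your ``involution'' remark at the end is both unnecessary and false: $w_1$ sends $W^I\setminus w_1 W^I$ to $w_1 W^I\setminus W^I$, not to itself. Fortunately you do not need it---once $w_1^{-1}W^I=w_1 W^I$, the two sums in \eqref{eq:expand0} have identical index sets and identical summands, so \eqref{eq:expand1} is immediate.
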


\begin{proof}
We first prove that
\begin{equation}
\begin{split}
  \label{eq:expand2}
  &\Bigl(\prod_{\alpha\in\Delta_{w^{-1}}}(-1)^{s_{\alpha}}\Bigr)
  S(w^{-1}\mathbf{s},w^{-1}\mathbf{y};I;\Delta)\\
    &\quad=
    \sum_{v\in wW^I\cap W^I}
    \Bigl(\prod_{\alpha\in\Delta_{v^{-1}}}(-1)^{-s_{\alpha}}\Bigr)
    \zeta_r(v^{-1}\mathbf{s},v^{-1}\mathbf{y};\Delta)
    \\
    &\qquad
    +
    \Bigl(\prod_{\alpha\in\Delta_{w^{-1}}}(-1)^{s_{\alpha}}\Bigr)
    \sum_{v\in wW^I\setminus W^I}
    \Bigl(\prod_{\alpha\in\Delta_{v^{-1}w}}(-1)^{-s_{w\alpha}}\Bigr)
    \zeta_r(v^{-1}\mathbf{s},v^{-1}\mathbf{y};\Delta)
\end{split}
\end{equation}
for any $w\in\Aut$.
Since $\iota^{*-1}(P_{I+})=\bigcup_{v\in W^I}v P_+$ by 
\cite[Lemma 2]{KM3}, we can write the left-hand side of \eqref{eq:expand2} as
\begin{align}\label{tochuu1}
\Bigl(\prod_{\alpha\in\Delta_{w^{-1}}}(-1)^{s_{\alpha}}\Bigr)                        
    \sum_{u\in W^I}                                           
    \sum_{\lambda\in uP_{++}}                                                          
    e^{2\pi\sqrt{-1}\langle w^{-1}\mathbf{y},\lambda\rangle}                           
    \prod_{\alpha\in\Delta_+}                                                          
    \frac{1}{\langle\alpha^\vee,\lambda\rangle^{s_{w\alpha}}}.
\end{align}
The inner sum of \eqref{tochuu1} is equal to
\begin{align*}
\sum_{\lambda\in uP_{++}}                                                             
e^{2\pi\sqrt{-1}\langle \mathbf{y},w\lambda\rangle}                                
    \prod_{\alpha\in\Delta_+}                                                           
    \frac{1}{\langle(w\alpha)^\vee,w\lambda\rangle^{s_{w\alpha}}}
=\sum_{\lambda\in wuP_{++}}
e^{2\pi\sqrt{-1}\langle \mathbf{y},\lambda\rangle}
   \prod_{\alpha\in w\Delta_+} 
   \frac{1}{\langle\alpha^\vee,\lambda\rangle^{s_{\alpha}}},
\end{align*}
and concerning the last product,
since $w\Delta_+\cap\Delta_-=w\Delta_w=-\Delta_{w^{-1}}$, we have
\begin{align}\label{signature}
\prod_{\alpha\in w\Delta_+}                                                            
   \frac{1}{\langle\alpha^\vee,\lambda\rangle^{s_{\alpha}}}
=\Bigl(\prod_{\alpha\in \Delta_{w^{-1}}}(-1)^{-s_{\alpha}}\Bigr)
   \prod_{\alpha\in \Delta_+}
   \frac{1}{\langle\alpha^\vee,\lambda\rangle^{s_{\alpha}}}.
\end{align}
Using this expression when $wu\in W^I$, we see that \eqref{tochuu1} is equal to
\begin{equation}\label{tochuu2}
\begin{split}
 &\sum_{\substack{u\in W^I\\wu\in W^I}}                                                
    \sum_{\lambda\in wuP_{++}}                                                         
    e^{2\pi\sqrt{-1}\langle \mathbf{y},\lambda\rangle}                                 
    \prod_{\alpha\in\Delta_+}                                                          
    \frac{1}{\langle\alpha^\vee,\lambda\rangle^{s_\alpha}}                             
    \\                                                                                 
    &\qquad+                                                                           
    \Bigl(\prod_{\alpha\in\Delta_{w^{-1}}}(-1)^{s_{\alpha}}\Bigr)                      
    \sum_{\substack{u\in W^I\\wu\notin W^I}}                                           
    \sum_{\lambda\in uP_{++}}                                                          
    e^{2\pi\sqrt{-1}\langle w^{-1}\mathbf{y},\lambda\rangle}                           
    \prod_{\alpha\in\Delta_+}                                                          
    \frac{1}{\langle\alpha^\vee,\lambda\rangle^{s_{w\alpha}}}\\
    &=\Sigma_1+\Bigl(\prod_{\alpha\in\Delta_{w^{-1}}}(-1)^{s_{\alpha}}\Bigr)
    \Sigma_2,
\end{split}
\end{equation}
say.   Putting $wu=v$, we have
\begin{equation*}  
\begin{split}
\Sigma_1&=\sum_{v\in wW^I\cap W^I}
   \sum_{\lambda\in vP_{++}}                                                            
    e^{2\pi\sqrt{-1}\langle \mathbf{y},\lambda\rangle}    
    \prod_{\alpha\in\Delta_+}                                                           
    \frac{1}{\langle\alpha^\vee,\lambda\rangle^{s_\alpha}}\\
&=\sum_{v\in wW^I\cap W^I}                                                        
   \sum_{\lambda\in P_{++}}                               
    e^{2\pi\sqrt{-1}\langle \mathbf{y},v\lambda\rangle}                                  
    \prod_{\alpha\in\Delta_+}                                                           
    \frac{1}{\langle\alpha^\vee,v\lambda\rangle^{s_\alpha}}\\ 
&=\sum_{v\in wW^I\cap W^I}                                                              
   \sum_{\lambda\in P_{++}}
   e^{2\pi\sqrt{-1}\langle v^{-1}\mathbf{y},\lambda\rangle}
   \prod_{\alpha\in v^{-1}\Delta_+}
   \frac{1}{\langle\alpha^\vee,\lambda\rangle^{s_{v\alpha}}},
\end{split}
\end{equation*}
and, as in \eqref{signature}, the signature appears from the last product when
$\alpha\in v^{-1}\Delta_+\cap \Delta_- =-\Delta_v$, which is
$$
\prod_{\alpha\in\Delta_v}(-1)^{-s_{v\alpha}}
   =\prod_{\alpha\in\Delta_{v^{-1}}}(-1)^{-s_{\alpha}}
$$
because $v\Delta_v=-\Delta_{v^{-1}}$.
Therefore we obtain
\begin{equation}\label{tochuu3}
\begin{split}
\Sigma_1&=\sum_{v\in wW^I\cap W^I}                          
   \Bigl(\prod_{\alpha\in\Delta_{v^{-1}}}(-1)^{-s_{\alpha}}\Bigr)
   \sum_{\lambda\in P_{++}}                                                             
   e^{2\pi\sqrt{-1}\langle v^{-1}\mathbf{y},\lambda\rangle}                             
   \prod_{\alpha\in \Delta_+}                                                     
   \frac{1}{\langle\alpha^\vee,\lambda\rangle^{s_{v\alpha}}}\\
&=\sum_{v\in wW^I\cap W^I}                 
   \Bigl(\prod_{\alpha\in\Delta_{v^{-1}}}(-1)^{-s_{\alpha}}\Bigr)
   \zeta_r(v^{-1}\mathbf{s},v^{-1}\mathbf{y};\Delta).
\end{split}
\end{equation}
Similarly we can show
\begin{align}\label{tochuu4}
\Sigma_2=\sum_{v\in wW^I\setminus W^I}\Bigl(\prod_{\alpha\in\Delta_{v^{-1}w}}
(-1)^{-s_{w\alpha}}\Bigr)\zeta_r(v^{-1}\mathbf{s},v^{-1}\mathbf{y};\Delta).
\end{align}
Substituting \eqref{tochuu3} and \eqref{tochuu4} into \eqref{tochuu2}, we
obtain \eqref{eq:expand2}.

Now consider the equation
\begin{equation}\label{tochuu5}
  2S(\mathbf{s},\mathbf{y};I;\Delta)=
  S(\mathbf{s},\mathbf{y};I;\Delta)
  -
  \Bigl(\prod_{\alpha\in\Delta_{w_1^{-1}}}(-1)^{s_{\alpha}}\Bigr)
  S(w_1^{-1}\mathbf{s},w_1^{-1}\mathbf{y};I;\Delta),
\end{equation}
which trivially follows from the assumptions \eqref{eq:as1} and \eqref{eq:as2}.
Substitute the expansions \eqref{eq:func_eq} and \eqref{eq:expand2}
(with $w=w_1$) to the
right-hand side of \eqref{tochuu5}.   The first sum on the right-hand side of 
\eqref{eq:expand2} is cancelled with the part $v\in w_1 W^I\cap W^I$ of
\eqref{eq:func_eq}, and hence 
\begin{equation}\label{tochuu6}
  \begin{split}
&2S(\mathbf{s},\mathbf{y};I;\Delta)=
    \sum_{v\in W^I \setminus w W^I}
    \Bigl(\prod_{\alpha\in\Delta_{v^{-1}}}(-1)^{-s_{\alpha}}\Bigr)
    \zeta_r(v^{-1}\mathbf{s},v^{-1}\mathbf{y};\Delta)
    \\
    &\qquad
    -
\Bigl(\prod_{\alpha\in\Delta_{w^{-1}}}(-1)^{s_{\alpha}}\Bigr)                         
    \sum_{v\in wW^I\setminus W^I}                           
    \Bigl(\prod_{\alpha\in\Delta_{v^{-1}w}}(-1)^{-s_{w\alpha}}\Bigr)                     
    \zeta_r(v^{-1}\mathbf{s},v^{-1}\mathbf{y};\Delta).
\end{split}                                                                             
\end{equation}
We see that the second term on the right-hand side of \eqref{tochuu6} is, 
renaming $w^{-1}v$ by $v$ and using \eqref{eq:as1} and \eqref{eq:as2}, 
equal to
\begin{equation}\label{tochuu7}                                                                        
  \begin{split}
    &\Bigl(\prod_{\alpha\in\Delta_{w^{-1}}}(-1)^{s_{\alpha}}\Bigr)
    \sum_{v\in W^I\setminus w^{-1}W^I}
    \Bigl(\prod_{\alpha\in\Delta_{v^{-1}}}(-1)^{-s_{w\alpha}}\Bigr)
    \zeta_r(v^{-1}w^{-1}\mathbf{s},v^{-1}w^{-1}\mathbf{y};\Delta)\\
&\quad=-
    \sum_{v\in W^I\setminus w^{-1}W^I}
    \Bigl(\prod_{\alpha\in\Delta_{v^{-1}}}(-1)^{-s_{\alpha}}\Bigr)
    \zeta_r(v^{-1}\mathbf{s},v^{-1}\mathbf{y};\Delta).
  \end{split}
\end{equation}
The desired results follow from \eqref{tochuu6} and \eqref{tochuu7}.
\end{proof}

\begin{remark}\label{rem1}
The above Theorem \ref{thm:reduction} is stated under the condition 
$\mathbf{s}\in\mathcal{S}$.
Treating more carefully, however, we can generalize this theorem
to the case when $s_{\alpha}=1$ for some of the $\alpha$'s (cf. 
\cite[Remark 2]{KM3}).
\end{remark}

\begin{remark}\label{rem2}
Since the right-hand sides of \eqref{eq:expand0} and \eqref{eq:expand1}
include signature factors, sometimes the right-hand side might be zero.
If so, then Theorem \ref{thm:reduction} gives no useful information.
In the next section we will give examples when the right-hand side does not
vanish. 
This is the key point why we can sometimes treat the situation when some of the
variables are odd integers.
\end{remark}

\

\section{Application of Theorem \ref{thm:reduction} to the case $G_2$} \label{sec-2.5}

Hereafter in the present paper we concentrate on the study of the zeta-function of
the root system $G_2$.   The fundamental system of $G_2$ is 
$\Psi=\{\alpha_1,\alpha_2\}$, where $|\alpha_2|=\sqrt{3}|\alpha_1|$ and the angle 
between $\alpha_1$ and $\alpha_2$ is $5\pi/6$. 
Denote the positive roots by $\alpha_1,\ldots,\alpha_6$, where
\begin{equation}                                                                        
  \begin{aligned}                                                                       
    \alpha_3&=3\alpha_1+\alpha_2,\qquad &\alpha_3^\vee&=\alpha_1^\vee+\alpha_2^\vee\\   
    \alpha_4&=3\alpha_1+2\alpha_2,\qquad &\alpha_4^\vee&=\alpha_1^\vee+2\alpha_2^\vee\\ 
    \alpha_5&=\alpha_1+\alpha_2,\qquad &\alpha_5^\vee&=\alpha_1^\vee+3\alpha_2^\vee\\   
    \alpha_6&=2\alpha_1+\alpha_2,\qquad &\alpha_6^\vee&=2\alpha_1^\vee+3\alpha_2^\vee   
  \end{aligned}                                                                         
\end{equation}
and we abbreviate $\sigma_j=\sigma_{\alpha_j}$.
Applying Weyl's dimension formula with the above data to \eqref{eq:def_LZ}, we 
find that the form of the zeta-function of $G_2$ is given by \eqref{G2-zeta},
with $s_j=s_{\alpha_j}$ ($1\leq j\leq 6$).

Now we show an application of Theorem \ref{thm:reduction} to the $G_2$ case.
Assume $s_i$ are all positive integers ($\geq 2$).
Let $I=\{2\}$, $\mathbf{y}=\mathbf{0}$ and $w_1=w_0\sigma_1$ with
the longest element
\begin{equation*}
  w_0=\sigma_1\sigma_2\sigma_1\sigma_2\sigma_1\sigma_2=
\sigma_2\sigma_1\sigma_2\sigma_1\sigma_2\sigma_1=-1.
\end{equation*}
Then we have $w_1=w_1^{-1}=
\sigma_2\sigma_1\sigma_2\sigma_1\sigma_2=-\sigma_1$, so
$w_1\alpha_1=\alpha_1$, $w_1\alpha_2=-\alpha_3$, $w_1\alpha_3=-\alpha_2$,
$w_1\alpha_4=-\alpha_4$, $w_1\alpha_5=-\alpha_6$, $w_1\alpha_6=-\alpha_5$,
and hence $w_1^{-1}\mathbf{s}=(s_1,s_3,s_2,s_4,s_6,s_5)$.
Therefore, when $s_2=s_3$ and $s_5=s_6$, we have $w_1^{-1}\mathbf{s}=\mathbf{s}$.
Since
\begin{equation*}                                                                       
  \Delta_{w_1^{-1}}=\Delta_+\cap w_1\Delta_-=\Delta_+\cap \sigma_1\Delta_+=\Delta_
+\setminus\{\alpha_1\},                                                                 
\end{equation*}
we have
\begin{equation*}                                                                       
    \Bigl(\prod_{\alpha\in\Delta_{w_1^{-1}}}(-1)^{s_{\alpha}}\Bigr)=                      
(-1)^{s_2+s_3+s_4+s_5+s_6}.                                                             
\end{equation*}
Therefore we can apply Theorem \ref{thm:reduction} when $s_2=s_3$, $s_5=s_6$, and
$s_2+s_3+s_4+s_5+s_6$ is odd.
It is easy to see that
\begin{gather*}
  W_I=\{1,\sigma_2\}, \\
  W^I=\{1,\sigma_1,\sigma_1\sigma_2,
  \sigma_1\sigma_2\sigma_1, \sigma_1\sigma_2\sigma_1\sigma_2,
  \sigma_1\sigma_2\sigma_1\sigma_2\sigma_1\},\\
  w_1 W^I=\{
  \sigma_2\sigma_1\sigma_2\sigma_1\sigma_2,
  \sigma_1\sigma_2\sigma_1\sigma_2\sigma_1\sigma_2,
  \sigma_1\sigma_2\sigma_1\sigma_2\sigma_1,
  \sigma_1\sigma_2\sigma_1\sigma_2,
  \sigma_1\sigma_2\sigma_1,
  \sigma_1\sigma_2\},
\end{gather*}
and hence $ W^I\setminus w_1 W^I=\{1,\sigma_1\}$.
Thus by \eqref{eq:expand1}, we have
$$
S(\mathbf{s},\mathbf{0};\{2\};G_2)=\zeta_2(\mathbf{s},\mathbf{0};G_2)
+(-1)^{s_1}\zeta_2(\sigma_1^{-1}\mathbf{s},\mathbf{0};G_2).
$$
Since $s_2=s_3$ and $s_5=s_6$, we have $\sigma_1^{-1}\mathbf{s}=\mathbf{s}$.
Therefore if $s_1$ is even then the right-hand side of the above is
$2\zeta_2(\mathbf{s},\mathbf{0};G_2)$.   The conclusion is as follows.

\begin{prop}\label{Prop-2.2}
Let $p,q,r,u\in\mathbb{N}_{\geq 2}$ with even $p$ and odd $r$. Then
\begin{equation}
  S((p,q,q,r,u,u),\mathbf{0};\{2\};G_2)=2\zeta_2(p,q,q,r,u,u;G_2). \label{2-29}
\end{equation}
\end{prop}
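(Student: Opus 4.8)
The plan is to verify the hypotheses of Theorem~\ref{thm:reduction} for the specific choice $\Delta=G_2$, $I=\{2\}$, $\mathbf{y}=\mathbf{0}$, and $w_1=w_0\sigma_1$, and then to identify the reduced sum on the right-hand side of \eqref{eq:expand1} explicitly. First I would record the action of $w_1$ on the simple roots: using $w_0=-1$ (the longest element of $W(G_2)$) one computes $w_1=w_1^{-1}=-\sigma_1$, so $w_1\alpha_1=\alpha_1$ while $w_1$ sends the remaining positive roots to negatives of positive roots, namely $\alpha_2\leftrightarrow-\alpha_3$, $\alpha_4\mapsto-\alpha_4$, $\alpha_5\leftrightarrow-\alpha_6$. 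From this the permutation underlying the action \eqref{eq:A_act_on_s} is $w_1^{-1}\mathbf{s}=(s_1,s_3,s_2,s_4,s_6,s_5)$, so the substitution $(s_2,s_3,s_5,s_6)=(q,q,u,u)$ forces $w_1^{-1}\mathbf{s}=\mathbf{s}$; and $w_1^{-1}\mathbf{y}=\mathbf{y}$ is automatic since $\mathbf{y}=\mathbf{0}$. This checks \eqref{eq:as2}.

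Next I would compute the sign condition \eqref{eq:as1}. Since $\Delta_{w_1^{-1}}=\Delta_+\cap w_1\Delta_-=\Delta_+\setminus\{\alpha_1\}$, the product $\prod_{\alpha\in\Delta_{w_1^{-1}}}(-1)^{s_\alpha}$ equals $(-1)^{s_2+s_3+s_4+s_5+s_6}=(-1)^{2q+r+2u}=(-1)^r=-1$ because $r$ is odd. (The exponents $s_\alpha$ are integers by hypothesis, so the condition $s_\alpha\in\mathbb{Z}$ for $\alpha\in\Delta_{w_1^{-1}}$ holds, and $\mathbf{s}\in\mathcal{S}$ since all $s_i\geq 2$.) Thus all hypotheses of Theorem~\ref{thm:reduction} are met. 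It then remains to check the extra condition $w_1^{-1}W^I=w_1W^I$, which is immediate from $w_1=w_1^{-1}$, so \eqref{eq:expand1} applies. I would then determine $W^I$ and $w_1W^I$ as cosets: a routine enumeration of the six-element sets $W^I$ and $w_1W^I=-\sigma_1W^I$ (using the length/reduced-word structure of $W(G_2)$) yields $W^I\setminus w_1W^I=\{1,\sigma_1\}$.

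Plugging these two coset representatives into \eqref{eq:expand1} gives
\[
S(\mathbf{s},\mathbf{0};\{2\};G_2)=\zeta_2(\mathbf{s},\mathbf{0};G_2)+(-1)^{s_1}\zeta_2(\sigma_1^{-1}\mathbf{s},\mathbf{0};G_2),
\]
where the sign for $v=\sigma_1$ comes from $\Delta_{\sigma_1^{-1}}=\Delta_{\sigma_1}=\{\alpha_1\}$, giving the factor $(-1)^{-s_1}=(-1)^{s_1}=(-1)^p$. Since $(s_2,s_3,s_5,s_6)=(q,q,u,u)$ the permutation $\sigma_1^{-1}\mathbf{s}$ (which swaps the roles of $\alpha_2\leftrightarrow\alpha_3$ and $\alpha_5\leftrightarrow\alpha_6$ in the same way $w_1$ does, up to the fixed $\alpha_1,\alpha_4$) again equals $\mathbf{s}$; and since $p$ is even, $(-1)^p=1$. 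Hence the right-hand side collapses to $2\zeta_2(\mathbf{s},\mathbf{0};G_2)=2\zeta_2(p,q,q,r,u,u;G_2)$, which is \eqref{2-29}.

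The only genuinely delicate point is the coset bookkeeping: one must correctly compute $w_1$ as an element of $W$ (equivalently, verify $w_0=-1$ for $G_2$ and hence $w_1=-\sigma_1$), correctly read off the induced permutation of the positive roots, and correctly enumerate $W^I$ and the coset $w_1W^I$ to see that their symmetric difference consists exactly of $\{1,\sigma_1\}$. Everything else—the sign computations, the invariance of $\mathbf{s}$ under $\sigma_1$ and $w_1$, and the final substitution $p$ even, $r$ odd—is a direct check. I do not anticipate any conceptual obstacle beyond this combinatorial verification.
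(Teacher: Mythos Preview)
Your proposal is correct and follows essentially the same approach as the paper's own argument: the paper also chooses $w_1=w_0\sigma_1=-\sigma_1$, checks the hypotheses of Theorem~\ref{thm:reduction} via the same root computations, enumerates $W^I$ and $w_1W^I$ explicitly to find $W^I\setminus w_1W^I=\{1,\sigma_1\}$, and then collapses the two-term sum using $\sigma_1^{-1}\mathbf{s}=\mathbf{s}$ and the parity of $p$. Your treatment is a faithful reconstruction of the paper's proof.
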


Similarly we can treat the case $I=\{1\}$, $\mathbf{y}=\mathbf{0}$.
Then $W_I=\{1,\sigma_1\}$ and
\begin{equation}\label{doumodoumo}
W^I=\{1,\sigma_2,\sigma_2\sigma_1,\sigma_2\sigma_1\sigma_2,
\sigma_2\sigma_1\sigma_2\sigma_1, \sigma_2\sigma_1\sigma_2\sigma_1\sigma_2\}.
\end{equation}
In this case we choose $w_1=w_0\sigma_2$.   Then
$W^I\setminus w_1W^I=\{1,\sigma_2\}$.
We can apply Theorem \ref{thm:reduction} when $s_1=s_5$, $s_3=s_4$, and
$s_1+s_3+s_4+s_5+s_6$ is odd.   We obtain

\begin{prop}\label{Prop-2.3}
Let $p,q,r,u\in\mathbb{N}_{\geq 2}$ with even $q$ and odd $u$. Then
\begin{equation}
  S((p,q,r,r,p,u),\mathbf{0};\{1\};G_2)=2\zeta_2(p,q,r,r,p,u;G_2). \label{2-30}
\end{equation}
\end{prop}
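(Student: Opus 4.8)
The plan is to mirror the computation already carried out for Proposition \ref{Prop-2.2}, now with the roles of $\alpha_1$ and $\alpha_2$ interchanged. First I would record the action of $w_1=w_0\sigma_2$ on the simple roots. Since $w_0=-1$ on $V$, we have $w_1=-\sigma_2$, so $w_1\alpha_2=\alpha_2$ and $w_1$ acts as $-1$ composed with the reflection fixing $\alpha_2$; concretely one computes $w_1\alpha_1=-\alpha_5$, $w_1\alpha_5=-\alpha_1$, $w_1\alpha_3=-\alpha_4$, $w_1\alpha_4=-\alpha_3$, $w_1\alpha_6=-\alpha_6$ (the long root $\alpha_6$ being $w_1$-anti-invariant up to sign, analogously to how $\alpha_4$ behaved for $w_1=-\sigma_1$). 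Consequently $w_1^{-1}\mathbf{s}=(s_5,s_2,s_4,s_3,s_1,s_6)$, so the invariance condition $w_1^{-1}\mathbf{s}=\mathbf{s}$ of \eqref{eq:as2} is precisely $s_1=s_5$ and $s_3=s_4$; with $\mathbf{y}=\mathbf{0}$ the second half of \eqref{eq:as2} is automatic.

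Next I would verify the signature hypothesis \eqref{eq:as1}. As in the $I=\{2\}$ case one has $\Delta_{w_1^{-1}}=\Delta_+\cap w_1\Delta_-=\Delta_+\cap\sigma_2\Delta_+=\Delta_+\setminus\{\alpha_2\}$, whence $\prod_{\alpha\in\Delta_{w_1^{-1}}}(-1)^{s_\alpha}=(-1)^{s_1+s_3+s_4+s_5+s_6}$. Under $s_1=s_5$, $s_3=s_4$ this exponent is $2s_1+2s_3+s_6\equiv s_6\pmod 2$, so \eqref{eq:as1} holds exactly when $s_6$ is odd. With the assumption on $p,q,r,u$ in the statement ($q$ plays no constraining role here beyond $q\ge 2$), setting $s_1=s_5=p$, $s_2=q$, $s_3=s_4=r$, $s_6=u$ odd, all hypotheses of Theorem \ref{thm:reduction} are met, provided also that $w_1^{-1}W^I=w_1W^I$ so that the cleaner conclusion \eqref{eq:expand1} applies. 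Since $w_1=w_1^{-1}$ (because $w_0=-1$ is central and $\sigma_2^2=1$), this identity is immediate.

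Then I would compute the coset decomposition. With $W_I=\{1,\sigma_1\}$ and $W^I$ as in \eqref{doumodoumo}, left-multiplying the six listed reduced words by $w_1=\sigma_1\sigma_2\sigma_1\sigma_2\sigma_1=-\sigma_2$ (using $w_0\sigma_2=\sigma_1\sigma_2\sigma_1\sigma_2\sigma_1$) and reducing in the dihedral group of order $12$, one finds $w_1W^I$ consists of the five longer words together with $\sigma_2\sigma_1$, so that $W^I\setminus w_1W^I=\{1,\sigma_2\}$, exactly as asserted in the excerpt. Applying \eqref{eq:expand1} then gives $S((p,q,r,r,p,u),\mathbf{0};\{1\};G_2)=\zeta_2(\mathbf{s},\mathbf{0};G_2)+(-1)^{-s_2}\zeta_2(\sigma_2^{-1}\mathbf{s},\mathbf{0};G_2)$. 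Since $s_1=s_5$ and $s_3=s_4$ make $\mathbf{s}$ invariant under $\sigma_2$ as well (the reflection $\sigma_2$ permutes $(s_1\leftrightarrow s_5,\ s_3\leftrightarrow s_4)$ and fixes $s_2,s_6$, up to the $s_{-\alpha}=s_\alpha$ convention), we get $\sigma_2^{-1}\mathbf{s}=\mathbf{s}$; and $(-1)^{-s_2}=(-1)^q$, which with $q$ even equals $1$. Hence the right-hand side is $2\zeta_2(p,q,r,r,p,u;G_2)$, as claimed.

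The main obstacle is the bookkeeping with signs and with the $G_2$ Weyl group: one must be careful with the convention $s_{-\alpha}=s_\alpha$ when checking both the $\sigma_2$-invariance of $\mathbf{s}$ and the $w_1$-invariance, and with tracking which positive root each $w_1\alpha_j$ lands on (up to sign) so that the permutation $w_1^{-1}\mathbf{s}=(s_5,s_2,s_4,s_3,s_1,s_6)$ is correct. Once the explicit action of $w_1$ and $\sigma_2$ on the six positive roots is pinned down and the two relevant cosets $W^I\setminus w_1W^I=\{1,\sigma_2\}$ are confirmed, the rest is a direct substitution into Theorem \ref{thm:reduction}, entirely parallel to the derivation of Proposition \ref{Prop-2.2}.
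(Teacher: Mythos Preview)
Your proposal is correct and follows exactly the approach sketched in the paper: choose $w_1=w_0\sigma_2=-\sigma_2$, verify \eqref{eq:as1}--\eqref{eq:as2} for $\mathbf{s}=(p,q,r,r,p,u)$ with $u$ odd, compute $W^I\setminus w_1W^I=\{1,\sigma_2\}$, and apply \eqref{eq:expand1} together with the $\sigma_2$-invariance of $\mathbf{s}$ and the evenness of $q$. One small imprecision: your verbal description of $w_1W^I$ (``the five longer words together with $\sigma_2\sigma_1$'') is not quite right, since $w_1W^I$ contains the two elements $\sigma_1\sigma_2\sigma_1\sigma_2\sigma_1$ and $w_0$ that lie outside $W^I$; nevertheless your conclusion $W^I\setminus w_1W^I=\{1,\sigma_2\}$ is correct, and the rest of the argument goes through.
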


\

\section{A functional relation corresponding to $I=\{1\}$} \label{sec-2.75}

The results in the previous sections are valid only in the case
$\mathbf{s}\in\mathcal{S}$.   Hereafter we study the situation which includes the
case when some of the variables take the value 1.

In this section we will show a functional relation which corresponds, in some sense,
to the case $I=\{1\}$ in the preceding section.   The discussion on the general
situation would require more pages, so here we restrict ourselves to the following
one special example. 

\begin{example}\label{Exam-i-1}
The functional relation
\begin{align}
& \zeta_2(s,2,1,1,1,1;G_2)+\zeta_2(s,1,2,1,1,1;G_2)+\zeta_2(1,2,1,1,s,1;G_2)\notag\\
& \qquad -\zeta_2(1,1,2,1,1,s;G_2)+\zeta_2(1,1,1,2,1,s;G_2)-\zeta_2(1,1,1,2,s,1;G_2)\notag\\
& =\zeta(2)\zeta(s+4) - \left(\frac{651}{8}-2^{-s-1}-\frac{5\cdot 3^{-s-2}}{2}\right)\zeta(s+6) \notag\\
& \qquad  + \frac{9\pi }{2}\sum_{m\geq 1}\frac{\sin(2\pi m/3)}{m^{s+5}} -{135}\sum_{m\geq 1}\frac{\cos(2\pi m/3)}{m^{s+6}} \notag\\
& =\zeta(2)\zeta(s+4) - \left(\frac{111}{8}-2^{-s-1}\right)\zeta(s+6) + \frac{81}{4}L(1,\chi_3)L(s+5,\chi_3), \label{FR-01}
\end{align}
holds for $s\in \mathbb{C}$ except for singularities on the both sides, 
where $L(\cdot,\chi_3)$ denotes the Dirichlet $L$-function attached to the 
primitive Dirichlet character $\chi_3$ of conductor $3$.
\end{example}

\begin{proof}
We calculate
\begin{equation}\label{S-no-siki}
S((s,2,1,1,1,1),\mathbf{0},\{1\};G_2)
\end{equation}
in two ways.   Using \eqref{eq:func_eq} and \eqref{doumodoumo}, we find
that \eqref{S-no-siki} is equal to the left-hand side of \eqref{FR-01}.
On the other hand, since $P_{\{1\}+}=\mathbb{N}_0 \lambda_1$, from \eqref{eq:def_S}
it follows that \eqref{S-no-siki} is equal to
\begin{align}\label{uhen}
\sum_{m\geq 1}\sum_{n\not=0 \atop {m+n\not=0 \atop {m+2n\not=0 \atop {m+3n\not=0 \atop 2m+3n\not=0}}}}\frac{1}{m^s n^2 (m+n) (m+2n) (m+3n) (2m+3n)}.
\end{align}
Therefore the remaining task is to show that \eqref{uhen} is equal to the
right-hand side of \eqref{FR-01}.
A direct way of the proof is to rewrite \eqref{uhen} as
\begin{align}
& \sum_{m\geq 1}\sum_{n\not=0 \atop {l_1\not=0 \atop {l_2\not=0 \atop {l_3\not=0 \atop l_4\not=0}}}}\frac{1}{m^s n^2 l_1l_2l_3l_4}\int_{0}^{1}\int_{0}^{1}\int_{0}^{1}\int_{0}^{1}e^{2\pi ix_1(m+n-l_1)}e^{2\pi ix_2(m+2n-l_2)}\notag\\
& \ \qquad \times e^{2\pi ix_3(m+3n-l_3)}e^{2\pi ix_4(2m+3n-l_4)}dx_1dx_2dx_3dx_4
\label{eq-fin}
\end{align}
and compute this by using
\begin{equation}
\lim_{M\to \infty}\sum_{m=-M}^{M}\frac{e^{2\pi i m\theta}}{m^k}=-\frac{(2\pi i)^k}{k!}B_k(\theta-[\theta])\ \ (k\in \mathbb{N};\,\theta\in \mathbb{R}) \label{Lerch}
\end{equation}
(\cite[Theorem 12.19]{Apostol}),
where $[\theta]$ is the integer part of $\theta$, and $\{ B_n(x)\}$ are the Bernoulli polynomials defined by 
\begin{equation}
\frac{te^{xt}}{e^t-1}=\sum_{n=0}^\infty B_n(x)\frac{x^n}{n!}.\label{Ber-poly}
\end{equation}
This method is
based on an idea initiated by Zagier \cite{Za2} and systematically used by
Nakamura \cite{Nak06, Nak08a, Nak08b} (and also in \cite{MNOT, MNT}).
However, if we follow this way, the necessary computations are really enormous.
Therefore, in order to reduce the total amount of computations, we first modify
\eqref{uhen} using partial fraction decompositions, before applying the idea of
Zagier-Nakamura.

First, using the partial fraction decomposition
$$
\frac{1}{(m+3n)(2m+3n)}=\frac{1}{m(m+3n)}-\frac{1}{m(2m+3n)},
$$
we divide \eqref{uhen} into two sums
\begin{align*}
{\sum\sum}^* \frac{1}{m^{s+1} n^2 (m+n) (m+2n) (m+3n)}
-{\sum\sum}^* \frac{1}{m^{s+1} n^2 (m+n) (m+2n) (2m+3n)},
\end{align*}
where ${\sum\sum}^*$ denotes the same double sum as in \eqref{uhen}.
We then apply the same type of partial fraction decompositions some more times
to find that \eqref{uhen} is equal to
\begin{align}\label{prex1}
&2{\sum\sum}^*\frac{1}{m^{s+1}n^3(m+n)(m+2n)}
   -\frac{5}{2}{\sum\sum}^*\frac{1}{m^{s+1}n^4(m+n)}\\
&\;   +\frac{1}{2}{\sum\sum}^*\frac{1}{m^{s+1}n^4(m+3n)} 
   +4{\sum\sum}^*\frac{1}{m^{s+1}n^4(2m+3n)} \notag\\
&=2\Sigma_1-\frac{5}{2}\Sigma_2+\frac{1}{2}\Sigma_3+4\Sigma_4,\notag
\end{align}
say.
We divide $\Sigma_1$ as
$$
\Sigma_1=
{\sum\sum}^{**}-{\sum\sum_{\!\!\!\!\!\!\!\!\!\!\!m+3n= 0}}^{**}
-{\sum\sum_{\!\!\!\!\!\!\!\!\!\!\!2m+3n= 0}}^{**},
$$
where ${\sum\sum}^{**}$ denotes the sum over $m\geq 1$, $n\neq 0$, $m+n\neq 0$
and $m+2n\neq 0$.   Denote the first term by $\Sigma_{11}$. 
Putting $m=3l$ and $n=-l$ in the second sum, we see that the second term is
$$
-\sum_{l=1}^{\infty}\frac{1}{(3l)^{s+1}(-l)^3(3l-l)(3l-2l)}=\frac{1}{2\cdot 3^{s+1}}
\zeta(s+6).
$$
Similarly the third term is $-3^{-s-1}2^{-3}\zeta(s+6)$.   Therefore we have
\begin{equation}\label{prex2}
\Sigma_1=\Sigma_{11}+\left(\frac{1}{2\cdot 3^{s+1}}-\frac{1}{2^3 3^{s+1}}\right)
\zeta(s+6)=
\Sigma_{11}+\frac{1}{2^3 3^s}\zeta(s+6).
\end{equation}
As for $\Sigma_2$, we divide it as
\begin{equation}\label{prex3}
\Sigma_2={\sum\sum}^{***}-{\sum\sum_{\!\!\!\!\!\!\!\!\!\!\!m+2n= 0}}^{***}
-{\sum\sum_{\!\!\!\!\!\!\!\!\!\!\!m+3n= 0}}^{***}
-{\sum\sum_{\!\!\!\!\!\!\!\!\!\!\!2m+3n= 0}}^{***},
\end{equation}
where ${\sum\sum}^{***}$ denotes the sum over $m\geq 1$, $n\neq 0$, $m+n\neq 0$.
Denote the first term by $\Sigma_{21}$ and evaluate the remaining three sums as above to obtain
\begin{equation}\label{prex4}                                                           
\Sigma_2=\Sigma_{21}-\left(\frac{1}{2^{s+1}}-\frac{1}{2^4 3^{s-1}}\right)        
\zeta(s+6).                                                                             
\end{equation}
Similarly
\begin{equation}\label{prex5}                                                           
\Sigma_3=\Sigma_{31}+\left(\frac{1}{2}+\frac{1}{2^{s+1}}+\frac{1}{2^4 3^{s+2}}\right)
\zeta(s+6)                                                                             
\end{equation}
and
\begin{equation}\label{prex6}                                                           
\Sigma_4=\Sigma_{41}+\left(1-\frac{1}{2^{s+1}}-\frac{1}{3^{s+2}}\right)              
\zeta(s+6),                                                                            
\end{equation}
where
$$
\Sigma_{31}=\sum_{m\geq 1}\sum_{n\not=0 \atop {m+3n\not=0}}\frac{1}
{m^{s+1}n^4(m+3n)},\quad
\Sigma_{41}=\sum_{m\geq 1}\sum_{n\not=0 \atop {2m+3n\not=0}}\frac{1}
{m^{s+1}n^4(2m+3n)}.
$$
Applying a partial fraction decomposition once more, we obtain
$$
\Sigma_{11}={\sum\sum}^{**}\frac{1}{m^{s+1}n^4(m+n)}-
{\sum\sum}^{**}\frac{1}{m^{s+1}n^4(m+2n)}.
$$
On the right-hand side, we separate the part $m+2n=0$ from the first double sum and
separate the part $m+n=0$ from the second double sum.   We obtain
\begin{equation}\label{prex7}
\Sigma_{11}=\Sigma_{21}-\Sigma^{\sharp}
-\left(1+\frac{1}{2^{s+1}}\right)\zeta(s+6),
\end{equation}
where
$$
\Sigma^{\sharp}=\sum_{m\geq 1}\sum_{n\neq 0 \atop{m+2n\neq 0}}
\frac{1}{m^{s+1}n^4(m+2n)}.
$$

We evaluate $\Sigma_{21}$.   Recall the definition of the zeta-function of the
root system $A_2$ (or the Mordell-Tornheim double sum)
$$
\zeta_2(s_1,s_2,s_3;A_2)=\sum_{m,n\geq 1}\frac{1}{m^{s_1}n^{s_2}(m+n)^{s_3}}.
$$
The part corresponding to positive $n$ of the sum $\Sigma_{21}$ is exactly 
$\zeta_2(s+1,4,1;A_2)$.   The part corresponding to negative $n$ is, putting
$m-n=l$ when $m>n$ and $n-m=k$ when $m<n$, equal to
\begin{align*}
&\sum_{n,l\geq 1}\frac{1}{(n+l)^{s+1}n^4 l}-\sum_{m,k\geq 1}\frac{1}
{m^{s+1}(m+k)^4 k}\\
&=\zeta_2(4,s+1,1;A_2)-\zeta_2(1,s+1,4;A_2).
\end{align*}
Therefore
\begin{align}
\Sigma_{21}&=\zeta_2(s+1,4,1;A_2)+\zeta_2(4,s+1,1;A_2)-\zeta_2(1,s+1,4;A_2)\notag\\
&=-5\zeta(s+6)+2\zeta(2)\zeta(s+4)+2\zeta(4)\zeta(s+2),\label{prex9}
\end{align}
where the second equality can be seen by \cite[Theorem 3.1]{KMTJC}.

As for $\Sigma^{\sharp}$, we apply the method of Zagier-Nakamura.   Write
$\Sigma^{\sharp}$ as
\begin{align}
\Sigma^{\sharp}&=\sum_{m\geq 1 \atop {n\neq 0 \atop {l\neq 0}}}\frac{1}
{m^{s+1}n^4 l}\int_0^1 e^{2\pi i(m+2n-l)\theta}d\theta\notag\\
&=\sum_{m\geq 1}\frac{1}{m^{s+1}}\int_0^1 e^{2\pi im\theta}\sum_{n\neq 0}
\frac{e^{2\pi in\cdot 2\theta}}{n^4}\sum_{l\neq 0}\frac{e^{2\pi il(-\theta)}}{l}d\theta,
\label{prex10}
\end{align}
and apply \eqref{Lerch}.   We obtain
\begin{equation}\label{prex11}
\Sigma^{\sharp}=\frac{(2\pi i)^5}{24}\sum_{m\geq 1}\frac{1}{m^{s+1}}(J_1+J_2),
\end{equation}
where
$$
J_1=\int_0^{1/2}e^{2\pi im\theta}B_4(2\theta)B_1(1-\theta)d\theta,\quad
J_2=\int_{1/2}^{1}e^{2\pi im\theta}B_4(2\theta-1)B_1(1-\theta)d\theta.
$$
Since $B_1(x)=x-(1/2)$ and $B_4(x)=x^4-2x^3+x^2-(1/30)$, the factors
$B_4(2\theta)B_1(1-\theta)$ and $B_4(2\theta-1)B_1(1-\theta)$ are polynomials in
$\theta$ of degree 5.   It is easy to see recursively that
\begin{equation}\label{prex12}
\int_0^{1/2}e^{2\pi im\theta} \theta^k d\theta=\sum_{j=1}^{k+1}
\frac{(-1)^{j-1+m}k!}{(2\pi im)^j 2^{k+1-j}(k+1-j)!}-\frac{(-1)^k k!}{(2\pi im)^{k+1}}
\end{equation}
and
\begin{equation}\label{prex13}
\int_{1/2}^{1}e^{2\pi im\theta} \theta^k d\theta=\sum_{j=1}^{k+1}
\frac{(-1)^{j-1}k!}{(2\pi im)^j (k+1-j)!}\left(1-\frac{(-1)^m}{2^{k+1-j}}\right).
\end{equation}
Using these formulas, we can evaluate $J_1$ and $J_2$.   Substituting the results
into \eqref{prex11}, we find that $\Sigma^{\sharp}$ can be written in terms of
$\zeta(s)$ and $\phi(s)=\sum_{m=1}^{\infty}(-1)^{m}m^{-s}=(2^{1-s}-1)\zeta(s)$,
more explicitly,
\begin{equation}\label{prex14}
\Sigma^{\sharp}=\frac{\pi^4}{45}\zeta(s+2)+\frac{4\pi^2}{3}\zeta(s+4)
   -\left(16+\frac{1}{2^s}\right)\zeta(s+6).
\end{equation}
Substituting \eqref{prex9} and \eqref{prex14} into \eqref{prex7} we obtain
\begin{equation}\label{prexx}
\Sigma_{11}=-\pi^2\zeta(s+4)+\left(10+\frac{1}{2^{s+1}}\right)\zeta(s+6),
\end{equation}
and so
\begin{equation}\label{prex15}
2\Sigma_{11}-\frac{5}{2}\Sigma_{21}=-\frac{\pi^4}{18}\zeta(s+2)
-\frac{17\pi^2}{6}\zeta(s+4)+\left(\frac{65}{2}+\frac{1}{2^s}\right)\zeta(s+6).
\end{equation}

The evaluation of $\Sigma_{31}$ and $\Sigma_{41}$ is similar to that of
$\Sigma^{\sharp}$.   In these cases, instead of \eqref{prex12} and \eqref{prex13},
the integrals over the intervals $[0,1/3]$, $[1/3,2/3]$, and $[2/3,1]$ appear,
and hence the 3rd root of unity appears.   We obtain
\begin{align}
&\frac{1}{2}\Sigma_{31}+4\Sigma_{41}
=\frac{\pi^4}{18}\zeta(s+2)+3\pi^2\zeta(s+4)+
\left(-\frac{911}{8}-\frac{1}{2^{s+1}}+\frac{5}{2\cdot 3^{s+2}}\right)\zeta(s+6)\notag\\
&\qquad+\frac{9\pi}{2}\sum_{m\geq 1}\frac{\sin(2\pi m/3)}{m^{s+5}}
-135\sum_{m\geq 1}\frac{\cos(2\pi m/3)}{m^{s+6}}.\label{prex16}
\end{align}
Moreover it is easy to see that
\begin{align}\label{cosclausen}
\sum_{m\geq 1}\frac{\cos(2\pi m/3)}{m^{s+6}}=\frac{3^{-s-5}-1}{2}\zeta(s+6),
\end{align}
\begin{align}\label{sinclausen}
\sum_{m\geq 1}\frac{\sin(2\pi m/3)}{m^{s+5}}=
\frac{\sqrt{3}}{2}\sum_{m\geq 1}\frac{\chi_3(m)}{m^{s+5}}
=\frac{\sqrt{3}}{2}L(s+5,\chi_3).  
\end{align}
Consequently we can conclude that \eqref{uhen} coincides with 
the right-hand side of \eqref{FR-01}. 
\end{proof}

In particular, setting $s=1$ in \eqref{FR-01}, we obtain that the left-hand side is equal to $2\zeta_2(1,2,1,1,1,1;G_2)$ (see \eqref{2-29}). Hence we have
\begin{align}
\zeta_2(1,2,1,1,1,1;G_2)
=\frac{1}{2}\zeta(2)\zeta(5)- \frac{109}{16}\zeta(7)+ \frac{81}{8}L(1,\chi_3)L(6,\chi_3). \label{VR-01}
\end{align}

\begin{remark}\label{digression}
A little digression.   Recall that the zeta-function of the root system $C_2$ is
defined by
$$
\zeta_2(s_1,s_2,s_3,s_4;C_2)=\sum_{m,n\geq 1}\frac{1}{m^{s_1}n^{s_2}
(m+n)^{s_3}(m+2n)^{s_4}}.
$$
Divide $\Sigma_{11}$ into two subsums according as $n\geq 1$ and $n\leq -1$.
Then the former part is exactly $\zeta_2(s+1,3,1,1;C_2)$.
The latter is further divided according as $m-n>0$ and $m-n<0$.   The part
corresponding to $m-n<0$ is $-\zeta_2(s+1,1,3,1;C_2)$, while the remaining part
is again divided into two subsums.   The conclusion is that
\begin{align}
\Sigma_{11}=&\zeta_2(s+1,3,1,1;C_2)-\zeta_2(s+1,1,3,1;C_2)\notag\\
&+\zeta_2(1,1,3,s+1;C_2)-\zeta_2(1,3,1,s+1;C_2).\label{dig-1}
\end{align}
On the other hand, we have shown that $\Sigma_{11}$ can be written in terms of
$\zeta(s)$ (see \eqref{prexx}).   Combining these two formulas \eqref{dig-1} and
\eqref{prexx}, we obtain a functional relation between the zeta-function of
$C_2$ and the Riemann zeta-function, which is different from the previously known
relations (\cite[Section 8]{KMTJC}, \cite[Section 5]{Nak08b}).
\end{remark}
\ 

\section{Some lemmas} \label{sec-3}

In the next section, we will deduce a functional relation corresponding to
the case $I=\{2\}$, by a method different from that described in the preceding
section.
In this section, we prepare several lemmas which are necessary in the next section.
First, the following lemma is a slight modification of \cite[Lemma 4.2]{KM4} 
which can be proved similarly.

\begin{lemma}\label{L-4-2}
Let $\{ P_{m}\},\ \{Q_{m}\},\ \{R_{m}\}$ be sequences such that
\begin{equation*}
P_{m}=\sum_{j=0}^{[m/2]}R_{m-2j} \frac{(i\pi)^{2j}}{(2j)!},\ Q_{m}=\sum_{j=0}^{[m/2]}R_{m-2j}\frac{(i\pi)^{2j}}{(2j+1)!}
\end{equation*}
for any $m \in \mathbb{N}_0$. Then  
\begin{align}
& P_{m}=-2\sum_{\tau=0}^{[m/2]}\zeta(2\tau)Q_{m-2\tau}, \label{P-2h} \\
& Q_{2h}=\frac{2}{\pi^2}\sum_{\tau=0}^{h}\left(2^{2h-2\tau+2}-1\right)\zeta(2h-2\tau+2)P_{2\tau} \label{Q-2h}
\end{align}
for any $h \in \mathbb{N}_0$. 
\end{lemma}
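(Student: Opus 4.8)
The plan is to treat the two stated defining relations as linear systems relating the sequences $\{P_m\}$, $\{Q_m\}$, $\{R_m\}$ via the generating-function identities for $\zeta(2\tau)$ encoded in $\cot$ and $\csc$. First I would introduce the (formal or convergent) generating series $P(x)=\sum_{m\geq 0}P_m x^m/m!$ and similarly $Q(x)$, $R(x)$; then the hypotheses say precisely that $P(x)=\cosh(i\pi)\!*\!R$-type convolutions, i.e.\ $P(x)$ equals $R(x)$ times $\cos(\pi x)$-like factor and $Q(x)$ equals $R(x)$ times $(\sin(\pi x)/(\pi x))$-like factor — more carefully, summing over $j$ with weights $(i\pi)^{2j}/(2j)!$ and $(i\pi)^{2j}/(2j+1)!$ corresponds to multiplication by the even part of $e^{i\pi y}$ and by $\sum_j (i\pi)^{2j} y^{2j}/(2j+1)!$ respectively (after matching the combinatorial factors). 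The key point is that these two multipliers are $\cos(\pi y)$ and $\sin(\pi y)/(\pi y)$ in an appropriate variable, so $P$ and $Q$ are related by $P = (\pi y \cot(\pi y))\,Q$ in that transformed setting.

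Next I would invoke the classical expansions
\[
\pi y\cot(\pi y) = -2\sum_{\tau\geq 0}\zeta(2\tau)\,y^{2\tau},\qquad
\frac{\pi y}{\sin(\pi y)} \ \text{(or its reciprocal)}\ = \text{the series with coefficients } 2(2^{2h+2}-1)\zeta(2h+2),
\]
with the usual convention $\zeta(0)=-1/2$. Comparing coefficients in the identity $P=(\pi y\cot\pi y)Q$ yields \eqref{P-2h} directly. For \eqref{Q-2h}, I would invert: from $P=(\pi y\cot\pi y)Q$ one gets $Q = (\pi y\cot\pi y)^{-1}P$, and the needed fact is the power-series identity
\[
\frac{1}{\pi y\cot(\pi y)} \ \longleftrightarrow\ \frac{2}{\pi^2}\sum_{h\geq 0}\bigl(2^{2h+2}-1\bigr)\zeta(2h+2)\,y^{2h},
\]
which is a standard consequence of the partial-fraction/Fourier expansion of $\tan$ (equivalently $1/\cot$); extracting the coefficient of $y^{2h}$ gives \eqref{Q-2h}. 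Since these are even series, only even-index $P$'s and the identity restricted to $h\in\mathbb{N}_0$ appear, matching the statement exactly.

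The main obstacle I expect is purely bookkeeping: getting the factorial normalizations right so that the weights $(i\pi)^{2j}/(2j)!$ and $(i\pi)^{2j}/(2j+1)!$ in the hypotheses line up cleanly with multiplication by $\cos(\pi y)$ and $\mathrm{sinc}(\pi y)$ at the level of exponential generating functions — in particular tracking the extra $1/(2j+1)$ versus $1/(2j)!$ discrepancy, which forces the variable rescaling and is the source of the $1/\pi^2$ and the $2^{2h-2\tau+2}$ factors. Once the correct dictionary between the three sequences and $\cos/\sin$ is fixed, both \eqref{P-2h} and \eqref{Q-2h} reduce to quoting the two named power-series expansions of $\pi y\cot\pi y$ and its reciprocal; alternatively, since the lemma is asserted to be "a slight modification of \cite[Lemma 4.2]{KM4} which can be proved similarly," I would present the argument as a careful adaptation of that proof, indicating precisely which coefficients change and verifying the two resulting identities by the generating-function comparison just described.
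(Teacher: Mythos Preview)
Your approach is correct in substance, and since the paper gives no proof here but simply refers to \cite[Lemma~4.2]{KM4}, there is nothing to compare against in this paper. Two remarks. First, the right setting is \emph{ordinary} generating functions, not exponential ones: with $P(y)=\sum_{m\ge0}P_m y^m$ (and similarly for $Q,R$), the hypotheses become exactly $P(y)=R(y)\cos(\pi y)$ and $Q(y)=R(y)\,\dfrac{\sin(\pi y)}{\pi y}$, since $\cos(\pi y)=\sum_{j\ge0}\dfrac{(i\pi)^{2j}}{(2j)!}y^{2j}$ and $\dfrac{\sin(\pi y)}{\pi y}=\sum_{j\ge0}\dfrac{(i\pi)^{2j}}{(2j+1)!}y^{2j}$; this removes the ``bookkeeping obstacle'' you flagged. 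Second, once you have $P(y)=\pi y\cot(\pi y)\,Q(y)$, the identity $\pi y\cot(\pi y)=-2\sum_{\tau\ge0}\zeta(2\tau)y^{2\tau}$ (with $\zeta(0)=-\tfrac12$) gives \eqref{P-2h}, and $\dfrac{\tan(\pi y)}{\pi y}=\dfrac{2}{\pi^2}\sum_{k\ge0}(2^{2k+2}-1)\zeta(2k+2)y^{2k}$ gives \eqref{Q-2h} after the reindexing $k=h-\tau$; the restriction to even indices in \eqref{Q-2h} is automatic because the multiplier is an even series.
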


An important key to the argument in the next section is the following.

\begin{lemma}[\cite{KMTJC}\ Lemma 6.3] \label{L-4-3} Let $h \in \mathbb{N}$, 
$\lambda_j=(1+(-1)^j)/2$ for $j\in\mathbb{Z}$ and
\begin{align*}
& {\frak C}:=\left\{ C(l) \in \mathbb{C}\,|\, l \in \mathbb{Z},\ l \not=0 \right\}, \\
& {\frak D}:=\left\{ D(N;m;\eta) \in \mathbb{R}\,|\, N,m,\eta \in \mathbb{Z},\ N \not=0,\ m \geq 0,\ 1 \leq \eta \leq h\right\}, \\
& {\frak A}:=\{ a_\eta \in \mathbb{N}\,|\,1 \leq \eta \leq h\}
\end{align*}
be sets of numbers indexed by integers. Assume that the infinite series appearing in 
\begin{align}
\sum_{N \in \mathbb{Z} \atop N \not=0}(-1)^{N}C(N)e^{iN\theta} & -2\sum_{\eta=1}^{h}\sum_{k=0}^{a_\eta}\phi(a_\eta-k)\lambda_{a_\eta-k} \label{eq-4-3} \\
& \ \ \times \sum_{\xi=0}^{k}\left\{ \sum_{N \in \mathbb{Z} \atop N \not=0}(-1)^N D(N;k-\xi;\eta)e^{iN\theta}\right\}\frac{(i\theta)^\xi}{\xi!} \notag
\end{align}
are absolutely convergent for $\theta \in [-\pi,\pi]$, and that (\ref{eq-4-3}) is a constant function for $\theta \in [-\pi,\pi]$. Then, for $d \in \mathbb{N}_0$, 
\begin{align}
& \sum_{N \in \mathbb{Z} \atop N \not= 0}\frac{(-1)^{N}C(N)e^{iN\theta}}{N^d} -2\sum_{\eta=1}^{h}\sum_{k=0}^{a_\eta}\phi(a_\eta-k)\lambda_{a_\eta-k} \label{eq-4-4} \\
& \ \ \ \ \ \times \sum_{\xi=0}^{k}\bigg\{ \sum_{\omega=0}^{k-\xi}\binom{\omega+d-1}{\omega}(-1)^{\omega} \sum_{m \in \mathbb{Z} \atop m \not= 0}\frac{(-1)^m D(m;k-\xi-\omega;\eta)e^{im\theta}}{m^{d+\omega}}\bigg\}\frac{(i\theta)^\xi}{\xi!} \notag \\
& \ +2\sum_{k=0}^{d}\phi(d-k)\lambda_{d-k} \sum_{\xi=0}^{k}\bigg\{ \sum_{\eta=1}^{h} \sum_{\omega=0}^{a_\eta-1}\binom{\omega+k-\xi}{\omega}(-1)^{\omega}\notag \\
& \hspace{1in} \times \sum_{m \in \mathbb{Z} \atop m \not=0}\frac{D(m;a_\eta-1-\omega;\eta)}{m^{k-\xi+\omega+1}}\bigg\}\frac{(i\theta)^\xi}{\xi!} =0 \notag
\end{align}
holds for $\theta \in [-\pi,\pi]$, where the infinite series appearing on the left-hand side of (\ref{eq-4-4}) are absolutely convergent for $\theta \in [-\pi,\pi]$.
\end{lemma}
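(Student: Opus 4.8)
The plan is to prove \eqref{eq-4-4} by induction on $d$. The inductive step will apply, to the identity for $d$, the ``antiderivative'' operator $T$ which sends a trigonometric series $\sum_{N\neq0}a_Ne^{iN\theta}$ to $\sum_{N\neq0}a_NN^{-1}e^{iN\theta}$; concretely, for a function $f$ on $[-\pi,\pi]$ one puts $(Tf)(\theta)=i\int_0^\theta f(t)\,dt$ and then subtracts its mean over $[-\pi,\pi]$, so that $(Tf)'=if$ and $Tf$ has mean $0$. On a pure series $\sum_{N\neq0}(-1)^NC(N)e^{iN\theta}N^{-d}$ the operator $T$ merely replaces $N^{-d}$ by $N^{-d-1}$; the substantive effect is on the polynomial--times--exponential pieces, for which repeated integration by parts gives
\begin{equation*}
T\left[\frac{(i\theta)^\xi}{\xi!}e^{im\theta}\right]=R_{\xi,m}(\theta)-\langle R_{\xi,m}\rangle,\qquad
R_{\xi,m}(\theta):=e^{im\theta}\sum_{p=0}^{\xi}\frac{(-1)^p}{m^{p+1}}\,\frac{(i\theta)^{\xi-p}}{(\xi-p)!},
\end{equation*}
where $\langle\,\cdot\,\rangle$ denotes the mean over $[-\pi,\pi]$. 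That mean, computed again by parts, reintroduces only terms of the form $(-1)^m m^{-j}$, whose factor $(-1)^m$ cancels the $(-1)^N$ attached to $C$ and $D$; this is the reason why the last sum of \eqref{eq-4-4} involves $e^{im\theta}$-free and $(-1)^m$-free inner series.

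For the base case $d=0$ one observes that $\binom{\omega-1}{\omega}=0$ for $\omega\ge1$ while $\binom{-1}{0}=1$, so the inner $\omega$-sum in \eqref{eq-4-4} collapses and reproduces exactly the corresponding term of \eqref{eq-4-3}; since moreover $\phi(0)=-\tfrac{1}{2}$ and $\lambda_0=1$, the last sum of \eqref{eq-4-4} reduces to the single constant $\sum_{\eta=1}^{h}\sum_{\omega=0}^{a_\eta-1}(-1)^{\omega}\sum_{m\neq0}D(m;a_\eta-1-\omega;\eta)\,m^{-\omega-1}$. Thus \eqref{eq-4-4} for $d=0$ says that \eqref{eq-4-3}, which is constant by hypothesis, equals that constant, and this is verified by extracting the $0$-th Fourier coefficient (the mean) of \eqref{eq-4-3}: the series $\sum_{N\neq0}(-1)^NC(N)e^{iN\theta}$ contributes $0$, and for the rest one uses the by-parts value of $\langle\theta^\xi e^{im\theta}\rangle$ together with $\sum_{m\neq0}(-1)^m m^{-j}=(1+(-1)^j)\phi(j)=2\lambda_j\phi(j)$ and the fact that $\phi(j)$ for even $j$ is a rational multiple of $\pi^{j}$, which absorbs the powers of $\pi$ produced by that mean.

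For the inductive step, assume \eqref{eq-4-4} for $d$ and apply $T$. The $C$-series term becomes the $C$-series term of the instance with $d$ replaced by $d+1$. In the double sum over $\eta,k$, applying $T$ through the displayed formula to a summand carrying $\binom{\omega+d-1}{\omega}(-1)^\omega$, the exponent $d+\omega$ of $m$, and the factor $(i\theta)^\xi/\xi!$ shifts $\xi\mapsto\xi-p$ and $d+\omega\mapsto d+\omega+p+1$; summing over all ways of producing a given new exponent $d+1+\omega'$ turns $\sum_{\omega\le\omega'}\binom{\omega+d-1}{\omega}$ into $\binom{\omega'+d}{\omega'}$ by the hockey-stick identity, which is precisely the weight occurring in the $(d+1)$-instance. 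Finally, $T$ applied to the last sum of \eqref{eq-4-4} (a polynomial in $\theta$) raises its degree by one, the mean-zero normalization bringing in values $\zeta(2\nu)$, and this has to combine with the constants $\langle R_{\xi,m}\rangle$ produced by the $\eta,k$-sum to reproduce the last sum of the $(d+1)$-instance; the index shift $d-k\mapsto d+1-k$ in the $\phi(\cdot)\lambda_\cdot$ factors and the coefficient $\binom{\omega+k-\xi}{\omega}$ come out of the same composition-counting and another application of $\sum_{m\neq0}(-1)^m m^{-j}=2\lambda_j\phi(j)$.

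The main obstacle is exactly this last matching in the inductive step: verifying that $T$ of the last sum of \eqref{eq-4-4}, plus all the constant-of-integration corrections coming from the $\eta,k$-sum, reproduces the last sum of \eqref{eq-4-4} on the nose. This is a long but purely mechanical reindexing, in which one tracks the nested sums recording how $\theta^\xi$ is successively lowered, the binomial coefficients, the signs $(-1)^\omega$, the weights $\phi(\cdot)\lambda_\cdot$, and the exponents of $m$; nothing conceptually new beyond \eqref{Lerch}-type evaluations is needed. The absolute-convergence assertions required to integrate termwise and to interchange summation with integration are inherited from the convergence hypothesis on \eqref{eq-4-3} once the factors $N^{-d}$ and $m^{-(d+\omega)}$ (with $d\ge1$) are present.
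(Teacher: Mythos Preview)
The present paper does not contain a proof of this lemma: it is quoted verbatim from \cite[Lemma~6.3]{KMTJC}, and no argument is reproduced here. So there is no proof in this paper to compare your proposal against directly.

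On the merits of your proposal: the inductive strategy via the antiderivative operator $T$ is sound in outline, and the hockey-stick identity is indeed what promotes the binomial weights $\binom{\omega+d-1}{\omega}$ to $\binom{\omega'+d}{\omega'}$. However, you yourself flag the hardest part---verifying that the constants of integration produced by $T$ on the middle block combine with $T$ of the third block to yield exactly the third block of the $(d{+}1)$-instance---and then do not carry it out. This is not a minor bookkeeping step: it is precisely where the structure $\phi(d-k)\lambda_{d-k}$ and the range $0\le k\le d$ must emerge, and where one needs an identity relating $\zeta$-weighted and $\phi$-weighted sums (of the kind recorded here as Lemma~\ref{L-4-2}). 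The base case is also more delicate than you indicate: extracting the mean of \eqref{eq-4-3} and matching it to the claimed constant requires computing $\langle (i\theta)^\xi e^{iN\theta}\rangle$ and then resumming over $\xi$ and $k$, which again is where the $\phi(\cdot)\lambda_\cdot$ weights come from. As written, your proposal is a correct plan but not a proof.

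For what it is worth, the paper does prove the companion Lemma~\ref{L-4-4}, and that argument is \emph{not} organized as induction on $d$: it introduces auxiliary functions $\mathcal{G}_N^\pm(\theta)$ (from \cite{KM4}), evaluates combinations of them at $\theta=\pm\pi$, and then invokes Lemma~\ref{L-4-2} to convert between the two kinds of weighted sums. This strongly suggests that the original proof in \cite{KMTJC} follows a similar non-inductive scheme, so your approach, if completed, would be a genuinely different route to the same result.
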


We prepare another lemma of the same feature, which is a slight generalization 
of \cite[Lemma 4.4]{KM4}. 

\begin{lemma} \label{L-4-4} Let $h \in \mathbb{N}$, 
\begin{align*}
& {\mathcal{A}}:=\left\{ \alpha(l) \in \mathbb{C}\,|\, l \in \mathbb{Z},\ l \not=0 \right\}, \\
& {\mathcal{B}}:=\left\{ \beta(N;m;\eta) \in \mathbb{R}\,|\, N,m,\eta \in \mathbb{Z},\ N \not=0,\ m \geq 0,\ 1 \leq \eta \leq h\right\}, \\
& {\mathcal{C}}:=\{ c_\eta \in \mathbb{N}\,|\,1 \leq \eta \leq h\}
\end{align*}
be sets of numbers indexed by integers, and 
\begin{equation}
\begin{split}
R_{\pm}(\theta)=\sum_{m \in \mathbb{Z} \atop m \not=0}(\pm i)^{m}\alpha(m)e^{im\theta/2} & -2\sum_{\eta=1}^{h}\sum_{k=0}^{c_\eta}\phi(c_\eta-k)\lambda_{c_\eta-k}  \\
& \ \ \times \sum_{\xi=0}^{k}\left\{ \sum_{m \in \mathbb{Z} \atop m \not=0}(\pm i)^m \beta(m;k-\xi;\eta)e^{im\theta/2}\right\}\frac{(i\theta)^\xi}{\xi!}.
\end{split}
\label{eq-4-10}
\end{equation}
Assume that both of the right-hand sides of $R_{\pm}(\theta)$ in \eqref{eq-4-10} are absolutely convergent for $\theta \in [-\pi,\pi]$, and that both $R_{+}(\theta)$ and $R_{-}(\theta)$ are constant functions on $[-\pi,\pi]$. Then, for $d \in \mathbb{N}$, 
\begin{align}
\sum_{m \in \mathbb{Z} \atop m \not= 0}\frac{\alpha(m)}{m^{d}} & \ -2\sum_{\eta=1}^{h}\sum_{k=0}^{[c_\eta/2]}\zeta(2k) \sum_{\omega=0}^{c_\eta-2k}\binom{\omega+d-1}{\omega}(-2)^{\omega} \sum_{m \in \mathbb{Z} \atop m \not= 0}\frac{\beta(m;c_\eta-2k-\omega;\eta)}{m^{d+\omega}} \label{eq-4-11} \\
& \ +2\sum_{\eta=1}^{h} \sum_{k=0}^{[d/2]}\zeta(2k)2^{-2k} \sum_{\omega=0}^{c_\eta-1}\binom{\omega+d-2k}{\omega}(-2)^{\omega} \notag\\
& \hspace{1in} \times \sum_{m \in \mathbb{Z} \atop m \not=0}\frac{((-1)^m+1)\beta(m;c_\eta-1-\omega;\eta)}{m^{d-2k+\omega+1}} \notag\\
& \ -2\sum_{\eta=1}^{h} \sum_{k=0}^{[(d+1)/2]}\zeta(2k)\left(1-2^{-2k}\right) \sum_{\omega=0}^{c_\eta-1}\binom{\omega+d-2k}{\omega}(-2)^{\omega} \notag\\
& \hspace{1in} \times \sum_{m \in \mathbb{Z} \atop m \not=0}\frac{((-1)^m-1)\beta(m;c_\eta-1-\omega;\eta)}{m^{d-2k+\omega+1}}=0 \notag
\end{align}
for $\theta \in [-\pi,\pi]$, where the infinite series appearing on the left-hand side of (\ref{eq-4-11}) are absolutely convergent for $\theta \in [-\pi,\pi]$.\end{lemma}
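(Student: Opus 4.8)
The plan is to follow the proof of Lemma \ref{L-4-3} (that is, of \cite[Lemma 6.3]{KMTJC}) and of \cite[Lemma 4.4]{KM4}, carrying the general parameters $h$, $\{c_\eta\}$ and the abstract families $\mathcal{A},\mathcal{B},\mathcal{C}$ through the computation. By hypothesis $R_+(\theta)$ and $R_-(\theta)$ are constant on $[-\pi,\pi]$, and from each of these two constant-function identities I would carry out a ``division by $m^d$'' obtained by integrating $d$ times over $[-\pi,\theta]$ and putting the constants of integration into closed form by means of the Lerch-type formula \eqref{Lerch}. This is the same mechanism that passes from \eqref{eq-4-3} to \eqref{eq-4-4} in Lemma \ref{L-4-3}; the only genuinely new feature here is the half-angle $e^{im\theta/2}$ and the twisting factors $(\pm i)^m$, and the whole point of treating $R_+$ and $R_-$ together is precisely that their combination untwists these factors.

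First I would record the effect of one integration $F\mapsto\int_{-\pi}^{\theta}F(t)\,dt$ on the three building blocks of $R_\pm$. On an exponential sum $\sum_{m\neq0}(\pm i)^m\gamma(m)e^{im\theta/2}$ it produces $\tfrac{2}{i}\sum_{m\neq0}(\pm i)^m\gamma(m)m^{-1}e^{im\theta/2}$ plus the constant of integration $-\tfrac{2}{i}\sum_{m\neq0}(\pm i)^m e^{-i\pi m/2}\gamma(m)m^{-1}$; since $e^{-i\pi m/2}=(-i)^m$, this constant is the \emph{untwisted} sum $-\tfrac{2}{i}\sum_{m\neq0}\gamma(m)m^{-1}$ for $R_+$ and the alternating sum $-\tfrac{2}{i}\sum_{m\neq0}(-1)^m\gamma(m)m^{-1}$ for $R_-$ (integrating instead from the endpoint $\pi$ interchanges the two outcomes). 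On a product $\bigl(\sum_{m\neq0}(\pm i)^m\gamma(m)e^{im\theta/2}\bigr)(i\theta)^\xi/\xi!$, integration by parts lowers the degree of the monomial while introducing one more power of $1/m$, and iterating this is what generates the binomial coefficients $\binom{\omega+d-1}{\omega}$ and $\binom{\omega+d-2k}{\omega}$ together with the powers $(-2)^{\omega}$ appearing in \eqref{eq-4-11}. Repeating the whole step $d$ times and sorting the outcome by the power of $m$ in the denominator and by the power of $\theta$ gives, for each sign, an identity relating the function $\sum_{m\neq0}(\pm i)^m\alpha(m)m^{-d}e^{im\theta/2}$, the corresponding $\beta$-sums multiplied by monomials in $\theta$, and a polynomial in $\theta$ whose coefficients are the sums $\sum\gamma(m)m^{-j}$ and $\sum(-1)^m\gamma(m)m^{-j}$.

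Next I would evaluate the $R_+$-identity at $\theta=-\pi$ and the $R_-$-identity at $\theta=\pi$ and form suitable half-sums and half-differences. Because $(\pm i)^m e^{\mp i\pi m/2}=1$, this evaluation collapses the $\alpha$-exponential sum to the untwisted sum $\sum_{m\neq0}\alpha(m)m^{-d}$, which becomes the first term of \eqref{eq-4-11}, while the $\beta$-exponential sums reorganize into the even-$m$ and odd-$m$ projections carrying the factors $((-1)^m+1)$ and $((-1)^m-1)$. At the same time the constants of integration are recognized, through \eqref{Lerch} and the Bernoulli numbers, as values of $\sum_{m\neq0}m^{-2k}=2\zeta(2k)$ and $\sum_{m\neq0}(-1)^m m^{-2k}=2\phi(2k)$ (the $\lambda$-factors in \eqref{eq-4-10} forcing only even exponents to occur), and taking half-sums and half-differences and using $\phi(s)=(2^{1-s}-1)\zeta(s)$ produces exactly the weights $2^{-2k}\zeta(2k)$ and $(1-2^{-2k})\zeta(2k)$ of \eqref{eq-4-11}; the three summation ranges $[c_\eta/2]$, $[d/2]$, $[(d+1)/2]$ merely record whether the exponent $2k$ comes from the ``$c_\eta$ side'' of \eqref{eq-4-10} or from the $d$ integrations. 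Absolute convergence of every series on the left of \eqref{eq-4-11} is inherited from that of the series in \eqref{eq-4-10}, since each integration only improves the decay in $m$.

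The hard part will not be conceptual but organizational: one must track, through the $d$ iterated integrations and the final $R_+/R_-$ combination, the precise combinatorial coefficients generated by integration by parts against the monomials $(i\theta)^\xi/\xi!$, and check that the even/odd-$m$ projections and the $2^{-2k}$ versus $1-2^{-2k}$ weights are distributed consistently over all three $\zeta(2k)$-sums. Since this is a routine (if lengthy) elaboration of the arguments already given for Lemma \ref{L-4-3} and \cite[Lemma 4.4]{KM4}, in the actual write-up I would display only the modifications that the present level of generality requires.
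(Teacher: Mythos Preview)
Your proposal is correct and follows essentially the same approach as the paper: both amount to ``modify the proof of \cite[Lemma~4.4]{KM4}, replacing $2d$ by $d$ throughout and tracking the extra $R_+/R_-$ symmetry.'' The only organizational difference is that the paper packages the step where endpoint evaluations of the iterated integrals are converted into $\zeta(2k)$-weighted sums via Lemma~\ref{L-4-2} (yielding the identities \eqref{eq-4-17} and \eqref{eq-4-17b}), whereas you describe that conversion directly through the Lerch formula and Bernoulli numbers; these are equivalent formulations of the same computation.
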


\begin{proof}
We just indicate how to modify the proof of \cite[Lemma 4.4]{KM4} to obtain the 
above lemma.   Let $\mathcal{G}_N^\pm(\theta)$ and $\mathfrak{C}_n^\pm$ be as in
the proof of \cite[Lemma 4.4]{KM4}. 
Putting $N=d+1$ for $d \in \mathbb{N}$ and $\theta=\pi$ in 
\cite[(4.11)]{KM4}, we obtain 
\begin{equation}
\frac{i^{d}}{2\pi}\left\{ \mathcal{G}_{d+1}^+(\pi)-\mathcal{G}_{d+1}^+(-\pi)\right\}=\sum_{\nu=0}^{[d/2]}\mathfrak{C}_{d-2\nu}^+ 2^{d-2\nu}\frac{(i\pi)^{2\nu}}{(2\nu+1)!}. \label{eq-4-15}
\end{equation}
Similarly we have
\begin{equation}
\frac{i^{d+1}}{2}\left\{ \mathcal{G}_{d+1}^+(\pi)+\mathcal{G}_{d+1}^+(-\pi)\right\}=\sum_{\nu=0}^{[(d+1)/2]}\mathfrak{C}_{d+1-2\nu}^+ 2^{d+1-2\nu}\frac{(i\pi)^{2\nu}}{(2\nu)!}. \label{eq-4-16}
\end{equation}
These are analogues of \cite[(4.12)]{KM4} and \cite[(4.13)]{KM4}, with replacing
$2d$ by $d$ and $d+1$, respectively.
By Lemma \ref{L-4-2}, we have
\begin{equation}
\begin{split}
& \frac{i^{d}}{2}\left\{ \mathcal{G}_{d}^+(\pi)+\mathcal{G}_{d}^+(-\pi)\right\} \\
& \ \ =-\frac{i^{d}}{\pi}\sum_{\tau=0}^{[d/2]}\zeta(2\tau)(-1)^\tau \left\{ \mathcal{G}_{d+1-2\tau}^+(\pi)-\mathcal{G}_{d+1-2\tau}^+(-\pi)\right\}, 
\end{split}
\label{eq-4-17}
\end{equation}
and similarly,
\begin{align}\label{eq-4-17b}                 
& \frac{1}{2}\left\{ \mathcal{G}_{d}^-(\pi)-\mathcal{G}_{d}^-(-\pi)\right\} \\          
& \ \ =\frac{1}{\pi}\sum_{\rho=0}^{[(d-1)/2]}\left(2^{2\rho+2}-1\right)
\zeta(2\rho+2)(-1)^\rho \left\{ \mathcal{G}_{d-1-2\rho}^-(\pi)+
\mathcal{G}_{d-1-2\rho}^-(-\pi)\right\}\notag\\    
& \ \ =-\frac{1}{\pi}\sum_{\tau=1}^{[(d+1)/2]}\left(2^{2\tau}-1\right)\zeta(2\tau)
(-1)^\tau \left\{ \mathcal{G}_{d+1-2\tau}^-(\pi)+\mathcal{G}_{d+1-2\tau}^-(-\pi)
\right\}.\notag
\end{align}
We evaluate each side of \eqref{eq-4-17} in the same way as in the proof of
\cite[Lemma 4.4]{KM4} with obvious modifications.   The result is almost the same
as \cite[(4.17)]{KM4}, just replacing $2d$ by $d$, and the summation $\sum_{\xi=0}^d$
by $\sum_{\xi=0}^{[d/2]}$.
Similarly from \eqref{eq-4-17b} we obtain a formula almost the same as
\cite[(4.19)]{KM4}, just replacing $2d$ by $d$, and the summation $\sum_{\xi=0}^d$
by $\sum_{\xi=0}^{[(d+1)/2]}$.
Combining those two formulas we obtain \eqref{eq-4-11}.
\end{proof}

\ 

\section{A functional relation corresponding to $I=\{2\}$}
\label{sec-4}

Using the lemmas in the previous section, we now construct a functional relation 
among $\zeta_2({\boldsymbol{s}};G_2)$, $\zeta(s)$ and
$\phi(s)=(2^{1-s}-1)\zeta(s)$, which correspond to the case $I=\{ 2\}$ 
in Section \ref{sec-2}. 

\begin{theorem} \label{Fn-Rel} 
For $p,q,r,u,v \in \mathbb{N}$,
\begin{equation}
\begin{split}
& \zeta_2(p,s,q,r,u,v;G_2)+(-1)^p \zeta_2(p,q,s,r,v,u;G_2)+(-1)^{p+q}\zeta_2(v,q,r,s,p,u;G_2) \\
& \ \ +(-1)^{p+q+v}\zeta_2(v,r,q,s,u,p;G_2)+(-1)^{p+q+r+v}\zeta_2(u,r,s,q,v,p;G_2)\\
& \ \ +(-1)^{p+q+r+u+v}\zeta_2(u,s,r,q,p,v;G_2)\\
& \ \ +I_1+I_2+\cdots+I_8=0
\end{split}
\label{Fn-G2}
\end{equation}
holds for all $s \in \mathbb{C}$ except for singularities of functions on the 
left-hand side, where $I_j$ $(1\leq j\leq 8)$, defined below, are linear combinations 
of $\zeta(s)$ and $\phi(s)$.
\end{theorem}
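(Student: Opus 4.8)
The plan is to compute $S((p,s,q,r,u,v),\mathbf{0};\{2\};G_2)$ in two different ways, in the spirit of Example~\ref{Exam-i-1} but with $I=\{2\}$ in place of $I=\{1\}$. For the first evaluation I would apply the functional equation~\eqref{eq:func_eq} together with the explicit list $W^{\{2\}}=\{1,\sigma_1,\sigma_1\sigma_2,\sigma_1\sigma_2\sigma_1,\sigma_1\sigma_2\sigma_1\sigma_2,\sigma_1\sigma_2\sigma_1\sigma_2\sigma_1\}$ determined in Section~\ref{sec-2.5}. For each $w\in W^{\{2\}}$ one works out the inversion set $\Delta_{w^{-1}}=\Delta_+\cap w\Delta_-$, the signature $\prod_{\alpha\in\Delta_{w^{-1}}}(-1)^{-s_\alpha}$, and the permuted exponent vector $w^{-1}\mathbf{s}$, using the action of $\sigma_1,\sigma_2$ on the positive roots recorded in Section~\ref{sec-2.5}. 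Since $\alpha_2$ never lies in $\Delta_{w^{-1}}$ for $w\in W^{\{2\}}$, these signatures involve only the integers $p,q,r,u,v$ and are well defined for every complex $s$. I expect the six terms so produced to be exactly the first six summands of~\eqref{Fn-G2}: the inversion sets form the increasing chain $\emptyset\subset\{\alpha_1\}\subset\{\alpha_1,\alpha_3\}\subset\{\alpha_1,\alpha_3,\alpha_6\}\subset\{\alpha_1,\alpha_3,\alpha_6,\alpha_4\}\subset\{\alpha_1,\alpha_3,\alpha_6,\alpha_4,\alpha_5\}$, which yields the successive sign factors $(-1)^{p}$, $(-1)^{p+q}$, $(-1)^{p+q+v}$, $(-1)^{p+q+r+v}$, $(-1)^{p+q+r+u+v}$ (recall $s_{\alpha_1}=p$, $s_{\alpha_3}=q$, $s_{\alpha_4}=r$, $s_{\alpha_5}=u$, $s_{\alpha_6}=v$).

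For the second evaluation, since $P_{\{2\}+}=\mathbb{N}_0\lambda_2$, the definition~\eqref{eq:def_S} shows that $S((p,s,q,r,u,v),\mathbf{0};\{2\};G_2)$ equals
\[
\sum_{n\geq1}\ \sum_{\substack{m\in\mathbb{Z},\ m\neq0\\ m+n,\,m+2n,\,m+3n,\,2m+3n\neq0}}\frac{1}{m^{p}n^{s}(m+n)^{q}(m+2n)^{r}(m+3n)^{u}(2m+3n)^{v}}.
\]
I would evaluate this by the Zagier--Nakamura method applied in the $m$-aspect: after iterated partial-fraction decompositions in $m$ (as in~\eqref{prex1}; a few more are needed than in Example~\ref{Exam-i-1} because $2m+3n$ is not of the shape $m+kn$) the series reduces to Mordell--Tornheim double sums of type $A_2$, evaluated by known formulas as for~\eqref{prex9}, together with sums to which~\eqref{Lerch} applies. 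Since the only coefficient other than $1$ occurring in front of $m$ is the $2$ in $2m+3n$, only the dyadic division of the period interval is needed, so the resulting building blocks are expressible through $\zeta$ and $\phi(s)=(2^{1-s}-1)\zeta(s)$ alone and no cubic character enters. The bookkeeping of these $\theta$-integrals is organised by Lemmas~\ref{L-4-2}, \ref{L-4-3} and~\ref{L-4-4}: one exhibits the relevant combination of trigonometric and Bernoulli-polynomial sums which is constant in $\theta$ on $[-\pi,\pi]$, feeds it to Lemma~\ref{L-4-3} or Lemma~\ref{L-4-4} as appropriate, and reads off, at $d$ equal to the relevant shift of $s$ and at $\theta=\pm\pi$, the closed form of the $n^{-s}$-series; the contributions of the $\zeta(2\tau)$- and $\phi$-factors built into those lemmas assemble into the eight linear combinations $I_1,\dots,I_8$.

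Comparing the two evaluations and transposing all terms to one side then gives~\eqref{Fn-G2}. As in Example~\ref{Exam-i-1}, the identity is first obtained for $(p,q,r,u,v,s)$ in the region of absolute convergence --- enlarged, as in Remark~\ref{rem1} and \cite[Remark~2]{KM3}, to allow the value $1$ for $p,q,r,u,v$ --- and then extended to all $s\in\mathbb{C}$, away from the poles of the functions occurring, by analytic continuation in the single variable $s$.

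The hard part is the second evaluation. Because the free exponent $s$ sits on the variable $n$ that simultaneously controls the four forms $m+n$, $m+2n$, $m+3n$, $2m+3n$, the partial-fraction cascade is substantially longer and more ramified than in the $I=\{1\}$ example, and the precise identification of the constant-in-$\theta$ functions to be fed into Lemmas~\ref{L-4-3} and~\ref{L-4-4} --- hence the exact shapes of $I_1,\dots,I_8$ --- is the delicate step. One must also justify the interchanges of summation and integration and the handling of the only conditionally convergent Lerch expansion~\eqref{Lerch}, and treat separately the boundary cases in which some of $p,q,r,u,v$ equal $1$.
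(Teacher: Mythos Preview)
Your first evaluation via \eqref{eq:func_eq} and the explicit coset representatives $W^{\{2\}}$ is exactly what the paper does (see \eqref{5-4-1}), and your computation of the inversion sets and signs is correct.

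Your second evaluation, however, does not match the paper's route, and in fact conflates two distinct techniques. The paper does \emph{not} use partial fractions for Theorem~\ref{Fn-Rel}. It begins from the identity of \cite[Lemma~5.3]{KMTLie} (equation~\eqref{Lie}), which already expresses a combination involving $l^{-p}m^{-s}(l+m)^{-q}$ as a constant in~$\theta$. It then applies Lemma~\ref{L-4-3} with $d=r$ to insert the factor $(l+2m)^{-r}$, substitutes $x\mapsto -xe^{i\theta}$ and applies Lemma~\ref{L-4-3} again with $d=u$ to insert $(l+3m)^{-u}$, and finally specialises $x=\pm ie^{-3i\theta/2}$ and applies Lemma~\ref{L-4-4} with $d=v$ to insert $(2l+3m)^{-v}$. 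Each of these four steps contributes a pair of correction sums, and those eight pieces are precisely $I_1,\ldots,I_8$; the highly structured $a_l,b_l$ data in the statement record how the binomial coefficients propagate through the iteration. The partial-fraction/Zagier--Nakamura method you describe is what the paper used in Example~\ref{Exam-i-1} for the $I=\{1\}$ case with \emph{fixed small} exponents, and there Lemmas~\ref{L-4-3}, \ref{L-4-4} play no role at all. Those lemmas are not devices for ``organising $\theta$-integrals'' after partial fractions; they are the iterative engine that \emph{replaces} partial fractions by building up the full denominator one linear factor at a time from a known constant-in-$\theta$ seed.

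A purely partial-fraction approach for general $p,q,r,u,v$ could in principle produce an equivalent identity, but it would not yield the specific $I_j$ of the statement, and your sketch of how Lemmas~\ref{L-4-3} and~\ref{L-4-4} would enter after partial fractions does not correspond to how those lemmas operate.
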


The definition of $I_j$ is given by
$$
I_j=A_j+B_{1j}+B_{2j} \qquad  (1\leq j\leq 8),
$$
where $A_j$, $B_{1j}$, $B_{2j}$ ($1\leq j\leq 8$) are defined as follows:
\begin{align*}
A_j&=2(-1)^{p+a_1}\sum_{k=0}^{[a_2/2]}\zeta(2k)\sum_{\sigma=0}^{a_2-2k}
\binom{\sigma+v-1}{\sigma}\sum_{\rho=0}^{a_3-a_4}
\binom{\rho+u-a_5}{\rho}\\
&\times\sum_{\omega=0}^{a_6-a_7}\binom{\omega+r-a_8}{\omega}
\binom{p+q-1-\omega-a_7}{a_9-1}\\
&\times(-1)^{a_{10}} 2^{a_{11}} 3^{a_{12}}
\zeta(s+p+q+r+u+v-2k),
\end{align*}
\begin{align*}
B_{1j}&=2(-1)^{p+b_1}\sum_{k=0}^{[v/2]}2^{-2k}\zeta(2k)\sum_{\sigma=0}^{a_2-1}
\binom{\sigma+v-2k}{\sigma}\sum_{\rho=0}^{a_3-b_4}
\binom{\rho+u-b_5}{\rho}\\                                                               
&\times\sum_{\omega=0}^{a_6-b_7}\binom{\omega+r-b_8}{\omega}                             
\binom{p+q-1-\omega-b_7}{a_9-1}(-1)^{a_{10}} 2^{b_{11}} 3^{b_{12}}\\                    
&\times\{\zeta(s+p+q+r+u+v-2k)+\phi(s+p+q+r+u+v-2k)\},                  
\end{align*}
and
\begin{align*}                                                                           
B_{2j}&=2(-1)^{p+b_1}\sum_{k=0}^{[(v+1)/2]}(1-2^{-2k})\zeta(2k)\sum_{\sigma=0}^{a_2-1}
\binom{\sigma+v-2k}{\sigma}\sum_{\rho=0}^{a_3-b_4}                                       
\binom{\rho+u-b_5}{\rho}\\                                                               
&\times\sum_{\omega=0}^{a_6-b_7}\binom{\omega+r-b_8}{\omega}                             
\binom{p+q-1-\omega-b_7}{a_9-1}(-1)^{a_{10}} 2^{b_{11}} 3^{b_{12}}\\                     
&\times\{\zeta(s+p+q+r+u+v-2k)-\phi(s+p+q+r+u+v-2k)\},                                  
\end{align*}
where $a_l=a_l(j)$, $b_l=b_l(j)$ are as follows: According to
$j=1,\ldots,8$, $a_l$ ($1\leq l\leq 12$) take the values
\begin{align*}
&a_1=1,v+1,1,v,v+1,v,v,v+1,\\
&a_2=p,u,q,u,r,u,r,u,\\
&a_3=p,p,q,q,r,r,r,r,\\
&a_4=2k+\sigma,1,2k+\sigma,1,2k+\sigma,1,2k+\sigma,1,\\
&a_5=1,2k+\sigma,1,2k+\sigma,1,2k+\sigma,1,2k+\sigma,\\
&a_6=p,p,q,q,p,p,q,q,\\
&a_7=2k+\sigma+\rho,1+\rho,2k+\sigma+\rho,1+\rho,1,1,1,1,\\
&a_8=1,1,1,1,2k+\sigma+\rho,1+\rho,2k+\sigma+\rho,1+\rho,\\
&a_9=q,q,p,p,q,q,p,p,\\
&a_{10}=0,0,\sigma+\rho+\omega,\rho+\omega,\rho,\rho,\rho+\omega,\rho+\omega,
\end{align*}
\begin{align*}
&a_{11}=\sigma-r-\omega,\sigma-r-\omega,\sigma-u-\rho,-u+2k+2\sigma-\rho-1,\\
&\qquad\qquad -r+2k+2\sigma+\rho-\omega-1,-r+\sigma+\rho-\omega,\sigma,\sigma,
\end{align*}
and
$$
a_{12}=-u-v-\sigma-\rho,-u-v+2k-\rho-1,0,-v-\sigma,0,-v-\sigma,0,-v-\sigma.
$$
Next define $b_l$ ($l=1,4,5,7,8,11,12$).   First,
$$
b_1=1,v+1,0,v,v+1,v,v,v+1.
$$
The definitions of $b_4,b_5,b_7,b_8$ are similar to $a_4,a_5,a_7,a_8$, but
all $2k+\sigma$ are replaced by $1+\sigma$.   Finally,
\begin{align*}                                                                           
&b_{11}=\sigma-r-\omega,\sigma-r-\omega,\sigma-u-\rho,-u+2\sigma-\rho,\\            
&\qquad\qquad -r+2\sigma+\rho-\omega,-r+\sigma+\rho-\omega,\sigma,\sigma,           
\end{align*}
and
\begin{align*}
&b_{12}=-u-v+2k-\sigma-\rho-1,-u-v+2k-\rho-1,0,-v+2k-\sigma-1,\\
&\qquad\qquad 0,-v+2k-\sigma-1,0,-v+2k-\sigma-1.
\end{align*}

\begin{remark}
When $p,q,r,u,v$ are even, formula \eqref{Fn-G2} coincides with our previous result 
given in \cite[Theorem 5.1]{KM4}.
On this occasion we correct some misprints in the statement of \cite[Theorem 5.1]{KM4}.
On line 8 of page 202, we should replace 
$\binom{2p+2q-2-\rho-\omega}{2q-1}$ by $\binom{2p+2q-2-\rho-\omega}{2p-1}$.
On lines 12 and 16 of page 203, we should replace $3^{-2v-\sigma}$ by 
$3^{-2v-\sigma-1+2k}$.
\end{remark}

\begin{proof}[Proof of Theorem \ref{Fn-Rel}]
The technique to prove this theorem is essentially the same as in our previous papers (see \cite[Section 5]{KM4}; also \cite[Section 7]{KMTJC}, \cite[Section 5]{KMTLie}). 
Hence it is enough to give a sketch of the proof here. 

From \cite[Lemma 5.3]{KMTLie}, we have
\begin{align}
& \sum_{l\not=0,\,m\geq 1 \atop {l+m\not=0 \atop l+2m\not=0}} \frac{(-1)^{l+m}x^m e^{i(l+m)\theta}}{l^{p}m^{s}(l+m)^{q}} \label{Lie}\\
& \ \ -2\sum_{j=0}^{p}\ \phi(p-j)\varepsilon_{p-j} \sum_{\xi=0}^{j}\binom{q-1+j-\xi}{q-1}(-1)^{j-\xi}\sum_{m=1}^\infty \frac{(-1)^{m}x^m e^{im\theta}}{m^{s+q+j-\xi}}\frac{(i\theta)^{\xi}}{\xi!} \notag\\
& \ \ +2\sum_{j=0}^{q}\ \phi(q-j)\varepsilon_{q-j} \sum_{\xi=0}^{j}\binom{p-1+j-\xi}{p-1}(-1)^{p-1}\sum_{m=1}^\infty \frac{x^m }{m^{s+p+j-\xi}}\frac{(i\theta)^{\xi}}{\xi!}\notag\\
& \quad =-\frac{(-1)^{p+q}}{2^p}\sum_{m=1}^\infty \frac{x^m}{m^{s+p+q}}\notag
\end{align}
for $p,q\in \mathbb{N}$, $\theta \in [-\pi,\pi]$, $s \in \mathbb{R}$ with $s>1$ and $x\in \mathbb{C}$ with $|x|= 1$. By Lemma \ref{L-4-3} with $d=r \in \mathbb{N}$, we have
\begin{align*}
& \sum_{l\in \mathbb{Z},\,l\not=0 \atop {m\geq 1 \atop {l+m\not=0 \atop l+2m\not=0}}} \frac{(-1)^{l}x^m e^{i(l+2m)\theta}}{l^{p}m^s(l+m)^{q}(l+2m)^{r}}\\
& \ -2\sum_{j=0}^{p}\ \phi(p-j)\ \la_{p-j}\sum_{\xi=0}^{j} \sum_{\omega=0}^{j-\xi} \binom{\omega+r-1}{\omega} (-1)^\omega  \\
& \hspace{0.5in} \times \binom{q-1+j-\xi-\omega}{b-1}(-1)^{j-\xi-\omega}\frac{1}{2^{r+\omega}} \sum_{m=1}^\infty \frac{x^m e^{2im\theta}}{m^{s+q+j-\xi+r}}\frac{(i\theta)^{\xi}}{\xi!}  \\
& \ \ +2\sum_{j=0}^{q}\ \phi(q-j)\ \la_{q-j}\sum_{\xi=0}^{j} \sum_{\omega=0}^{j-\xi} \binom{\omega+r-1}{\omega} (-1)^\omega  \\
& \hspace{0.5in} \times \binom{p-1+j-\xi-\omega}{p-1}(-1)^{p-1}\sum_{m=1}^\infty \frac{(-1)^m x^m e^{im\theta}}{m^{s+p+r+j-\xi}}\frac{(i\theta)^{\xi}}{\xi!}  \\
& \ \ +2\sum_{j=0}^{r}\ \phi(r-j)\ \la_{r-j}\sum_{\xi=0}^{j} \sum_{\omega=0}^{p-1} \binom{\omega+j-\xi}{\omega} (-1)^\omega  \\
& \hspace{0.5in} \times \binom{p+q-2-\omega}{q-1}(-1)^{p-1-\omega}\frac{1}{2^{j-\xi+\omega+1}} \sum_{m=1}^\infty \frac{x^m}{m^{s+p+q+j-\xi}}\frac{(i\theta)^{\xi}}{\xi!}  \\
& \ \ -2\sum_{j=0}^{r}\ \phi(r-j)\ \la_{r-j}\sum_{\xi=0}^{j} \sum_{\omega=0}^{q-1} \binom{\omega+j-\xi}{\omega} (-1)^\omega  \\
& \hspace{0.5in} \times \binom{p+q-2-\omega}{p-1}(-1)^{p-1} \sum_{m=1}^\infty \frac{x^m}{m^{s+p+q+j-\xi}}\frac{(i\theta)^{\xi}}{\xi!}=0 
\end{align*}
for $\theta \in [-\pi,\pi]$, $p,q,r \in \mathbb{N}$, $s\in \mathbb{R}$ with $s>1$ and $x\in \mathbb{C}$ with $|x|\leq 1$. Here we replace $x$ by $-xe^{i\theta}$ and move the terms corresponding to $l+3m=0$ of the first member on the left-hand side of the above equation to the right-hand side. Then we have
\begin{align*}
& \sum_{l\in \mathbb{Z},\,l\not=0 \atop {m \geq 1 \atop {l+m\not=0 \atop {l+2m \not= 0 \atop l+3m\not= 0}}}} \frac{(-1)^{l+m}x^m e^{i(l+3m)\theta}}{l^{p}m^s(l+m)^{q}(l+2m)^{r}}\\
& \ \ -2\sum_{j=0}^{p}\ \phi(p-j)\ \la_{p-j}\sum_{\xi=0}^{j} \sum_{\omega=0}^{j-\xi} \binom{\omega+r-1}{\omega} (-1)^\omega  \\
& \hspace{0.5in} \times \binom{q-1+j-\xi-\omega}{q-1}(-1)^{j-\xi-\omega}\frac{1}{2^{r+\omega}} \sum_{m=1}^\infty \frac{(-1)^m x^m e^{3im\theta}}{m^{s+q+r+j-\xi}}\frac{(i\theta)^{\xi}}{\xi!}  \\
& \ \ +2\sum_{j=0}^{q}\ \phi(q-j)\ \la_{q-j}\sum_{\xi=0}^{j} \sum_{\omega=0}^{j-\xi} \binom{\omega+r-1}{\omega} (-1)^\omega  \\
& \hspace{0.5in} \times \binom{p-1+j-\xi-\omega}{p-1}(-1)^{p-1}\sum_{m=1}^\infty \frac{x^m e^{2im\theta}}{m^{s+p+r+j-\xi}}\frac{(i\theta)^{\xi}}{\xi!}  \\
& \ \ +2\sum_{j=0}^{r}\ \phi(r-j)\ \la_{r-j}\sum_{\xi=0}^{j} \sum_{\omega=0}^{p-1} \binom{\omega+j-\xi}{\omega} (-1)^\omega  \\
& \hspace{0.5in} \times \binom{p+q-2-\omega}{q-1}(-1)^{p-1-\omega}\frac{1}{2^{j-\xi+\omega+1}} \sum_{m=1}^\infty \frac{(-1)^m x^m e^{im\theta}}{m^{s+p+q+j-\xi}}\frac{(i\theta)^{\xi}}{\xi!}  \\
& \ \ +2\sum_{j=0}^{r}\ \phi(r-j)\ \la_{r-j}\sum_{\xi=0}^{j} \sum_{\omega=0}^{q-1} \binom{\omega+j-\xi}{\omega} (-1)^\omega  \\
& \hspace{0.5in} \times \binom{p+q-2-\omega}{p-1} (-1)^{p-1}\sum_{m=1}^\infty \frac{(-1)^m x^m e^{im\theta}}{m^{s+p+q+j-\xi}}\frac{(i\theta)^{\xi}}{\xi!} \\
& =- \frac{(-1)^{p+q+r}}{3^{2p}2^{2q}}\sum_{m =1}^\infty \frac{x^m }{m^{s+p+q+r}}.
\end{align*}
We again apply Lemma \ref{L-4-3} with $d=u\in \mathbb{N}$ to the above equation. 
Then we have
\begin{equation}
\begin{split}
& \sum_{l\in \mathbb{Z},\, l\not=0 \atop{m \geq 1 \atop {l+m\not=0 \atop {l+2m \not= 0 \atop l+3m\not= 0}}}} \frac{(-1)^{l+m}x^m e^{i(l+3m)\theta}}{l^{p}m^s(l+m)^{q}(l+2m)^{r}(l+3m)^{u}}\\
& \ \ +J_1(\theta;x)+J_2(\theta;x)+J_3(\theta;x)+J_4(\theta;x)=0,
\end{split}
\label{eq-4-7} 
\end{equation}
where
\begin{align*}
& J_1 (\theta;x) \\
& =-2\sum_{j=0}^{p}\ \phi(p-j)\ \la_{p-j}\sum_{\xi=0}^{j} \sum_{\rho=0}^{j-\xi}\binom{\rho+u-1}{\rho}(-1)^\rho \sum_{\omega=0}^{j-\xi-\rho} \binom{\omega+r-1}{\omega} (-1)^\omega  \\
& \hspace{0.2in} \times 3^{-u-\rho}\binom{q-1+j-\xi-\rho-\omega}{q-1}\frac{(-1)^{j-\xi-\rho-\omega}}{2^{r+\omega}} \sum_{m=1}^\infty \frac{(-1)^m x^m e^{3im\theta}}{m^{s+q+r+u+j-\xi}}\frac{(i\theta)^{\xi}}{\xi!} \\
& +2\sum_{j=0}^{u}\ \phi(u-j)\ \la_{u-j}\sum_{\xi=0}^{j} \sum_{\rho=0}^{p-1}\binom{\rho+j-\xi}{\rho}(-1)^\rho \sum_{\omega=0}^{p-1-\rho} \binom{\omega+r-1}{\omega} (-1)^\omega  \\
& \hspace{0.2in} \times 3^{-j+\xi-\rho-1}\binom{p+q-2-\rho-\omega}{q-1}\frac{(-1)^{p-1-\rho-\omega}}{2^{r+\omega}} \sum_{m=1}^\infty \frac{x^m}{m^{s+p+q+r+2j-\xi}}\frac{(i\theta)^{\xi}}{\xi!}.
\end{align*}
We can similarly write $J_2(\theta;x)$, $J_3(\theta;x)$ and $J_4(\theta;x)$, but they are omitted for the purpose of saving space. 

Next, setting $x=\pm i e^{-3i\theta/2}$ in \eqref{eq-4-7} and 
moving the terms corresponding to $2l+3m=0$ of the first member on the left-hand side to the right-hand side, we have
\begin{equation}
\begin{split}
& \sum_{l\in \mathbb{Z},\,l\not=0 \atop{m \geq 1 \atop {l+m\not=0 \atop {l+2m \not= 0 \atop {l+3m\not= 0 \atop 2l+3m \not=0}}}}} \frac{(-1)^{l+m} (\pm i)^{m} e^{i(2l+3m)\theta/2}}{l^{p}m^s(l+m)^{q}(l+2m)^{r}(l+3m)^{u}}\\
& \ \ +J_1(\theta;\pm i e^{-3i\theta/2})+J_2(\theta;\pm i e^{-3i\theta/2})+J_3(\theta;\pm i e^{-3i\theta/2})+J_4(\theta;\pm i e^{-3i\theta/2}) \\
& \ \ =-\sum_{l,m =1 \atop 2l=3m}^\infty \frac{1}{(-l)^{p}m^s(-l+m)^{q}(-l+2m)^{r}(-l+3m)^{u}}.
\end{split}
\label{eq-4-9} 
\end{equation}
Note that $(-1)^{l+m} (\pm i)^{m}=(\pm i)^{2l+3m}$. Hence 
we can apply Lemma \ref{L-4-4} with $d=v\in \mathbb{N}$ to \eqref{eq-4-9}, because  
we can see that the left-hand side of \eqref{eq-4-9} is of the same form 
as the right-hand side of \eqref{eq-4-10}. 
Consequently we obtain the equation given from \eqref{eq-4-11}. 
The first term on the left-hand side of the obtained equation is
\begin{equation}\label{eqeq}
\sum_{l\in \mathbb{Z},\ l\not=0 \atop {m \geq 1 \atop {l+m\not=0 \atop {l +2m\not= 0 
\atop {l+3m\not= 0 \atop 2l+3m \not=0}}}}} \frac{1}{l^{p}m^s(l+m)^{q}(l+2m)^{r}
(l+3m)^{u}(2l+3m)^{v}},
\end{equation}
while the remaining terms on the left-hand side of the obtained equation
can be expressed explicitly in terms of the Riemann zeta-function, which are
$I_1+\cdots+I_8$ in the statement of the theorem.
On the other hand, we see that \eqref{eqeq} is equal to
\begin{align}
& \zeta_2(p,s,q,r,u,v;G_2)+(-1)^p \zeta_2(p,q,s,c,v,u;G_2)+(-1)^{p+q}\zeta_2(v,q,r,s,p,u;G_2) \notag\\
& \ \ +(-1)^{p+q+v}\zeta_2(v,r,q,s,u,p;G_2)+(-1)^{p+q+r+v}\zeta_2(u,r,s,q,v,a;G_2)\notag\\
& \ \ +(-1)^{p+q+r+u+v}\zeta_2(u,s,r,q,p,v;G_2).\label{5-4-1}
\end{align}
This can be shown by decomposing \eqref{eqeq} by the same
argument as in \cite[Section 7]{KMTJC}; or, 
since \eqref{eqeq} coincides with $S(p,s,q,r,u,v),{\bf 0};\{2\};G_2)$ 
(see \eqref{eq:def_S}), 
\eqref{5-4-1} simply follows from \eqref{eq:func_eq}. 
Thus we obtain the assertion of the theorem.
\end{proof}

Setting $(p,q,r,u,v)=(2a,b,2c-1,d,d)$ for $a,b,c,d\in \mathbb{N}$ in \eqref{Fn-G2}, we see that
\begin{align}
& \ \ \zeta_{2}(2a,s,b,2c-1,d,d;G_2)+\zeta_{2}(2a,b,s,2c-1,d,d;G_2)  \notag\\
& +(-1)^b \zeta_{2}(d,b,2c-1,s,2a,d;G_2)+(-1)^{b+d}\zeta_{2}(d,2c-1,b,s,d,2a;G_2) \notag\\
& -(-1)^{b+d}\zeta_{2}(d,2c-1,s,b,d,2a;G_2)-(-1)^{b}\zeta_{2}(d,s,2c-1,b,2a,d;G_2) \label{5-4-2}
\end{align}
is expressed in terms of $\zeta(s)$ and $\phi(s)$. 
As we noted above (see \eqref{5-4-1}), \eqref{5-4-2} coincides with 
$$S((2a,s,b,2c-1,d,d),{\bf 0};\{2\};G_2).$$
In particular when $s=b$, it is equal to $2\zeta_2(2a,b,b,2c-1,d,d);G_2)$ (see \eqref{2-29}). Therefore we have the following.

\begin{corollary}\label{C-3-2} For $a,b,c,d\in \mathbb{N}$,
\begin{equation}
\zeta_{2}(2a,b,b,2c-1,d,d;G_2)\in \mathbb{Q}[\{\zeta(j)\,|\,j\in \mathbb{N}_{\geq 2}\}]. \label{e-3-2-1}
\end{equation}
\end{corollary}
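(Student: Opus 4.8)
The plan is to derive Corollary~\ref{C-3-2} directly from the functional relation \eqref{Fn-G2} of Theorem~\ref{Fn-Rel} by the substitution $(p,q,r,u,v)=(2a,b,2c-1,d,d)$ followed by a specialization of the complex variable $s$. First I would record, as is done in \eqref{5-4-2}, the precise shape of the six zeta-terms on the left-hand side of \eqref{Fn-G2} under this choice of parameters; the key observation is that with $p=2a$ even, $r=2c-1$ odd, and $u=v=d$ (so that $v$-terms and $u$-terms line up), the combination \eqref{5-4-2} is exactly $S((2a,s,b,2c-1,d,d),\mathbf{0};\{2\};G_2)$, matching the hypotheses ``even $p$, odd $r$'' of Proposition~\ref{Prop-2.2}.

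Next I would set $s=b$. On the one hand, by Proposition~\ref{Prop-2.2} (formula \eqref{2-29}), with $p=2a$, $q=b$, $r=2c-1$, $u=d$, the value $S((2a,b,b,2c-1,d,d),\mathbf{0};\{2\};G_2)$ equals $2\zeta_2(2a,b,b,2c-1,d,d;G_2)$. On the other hand, by Theorem~\ref{Fn-Rel} the same quantity equals $-(I_1+\cdots+I_8)$ evaluated at $s=b$, and every $I_j$ is, by construction, a finite $\mathbb{Q}$-linear combination of values $\zeta(2k)$ and of $\zeta(s+p+q+r+u+v-2k)$, $\phi(s+p+q+r+u+v-2k)$ with $s=b$ and all parameters positive integers; since $\phi(m)=(2^{1-m}-1)\zeta(m)$, each such term lies in $\mathbb{Q}[\{\zeta(j)\mid j\in\mathbb{N}_{\ge 2}\}]$. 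Dividing by $2$ then yields \eqref{e-3-2-1}.

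The one genuine subtlety — and the step I expect to need the most care — is the admissibility of the specialization $s=b$: Theorem~\ref{Fn-Rel} is stated for all $s\in\mathbb{C}$ away from singularities of the functions involved, so I must check that $s=b$ is not such a singular point, i.e. that none of the six $G_2$-zeta-values nor any of the $\zeta$- or $\phi$-factors appearing in the $I_j$ has a pole there. The $G_2$-zeta-values at the relevant integer points are finite because all their exponents are positive integers (one uses the meromorphic continuation and the fact that the total weight is large enough, or invokes Remark~\ref{rem1} together with the arguments of \cite{KM3} on values where some exponent equals $1$). The arguments of the one-variable $\zeta$ occurring in $I_j$ are of the form $s+p+q+r+u+v-2k$ with $0\le k\le \lfloor(v+1)/2\rfloor$; at $s=b$ these are integers $\ge 2$ unless the argument hits $1$, which one checks does not occur for the parameter ranges in question, or is handled by the same limiting procedure. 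Once this bookkeeping is done, the corollary follows immediately; there is no further computation, only the verification that every ingredient on the right-hand side is a polynomial in Riemann zeta values with rational coefficients.
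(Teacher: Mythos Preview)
Your approach is essentially the same as the paper's: substitute $(p,q,r,u,v)=(2a,b,2c-1,d,d)$ into \eqref{Fn-G2}, identify the resulting six-term combination with $S((2a,s,b,2c-1,d,d),\mathbf{0};\{2\};G_2)$, specialize $s=b$, and invoke \eqref{2-29} to collapse it to $2\zeta_2(2a,b,b,2c-1,d,d;G_2)$. Your added discussion of the admissibility of $s=b$ and of the case where some exponents equal $1$ (handled via Remark~\ref{rem1}) is more explicit than the paper, which simply cites \eqref{2-29} without comment; otherwise the arguments coincide.
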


\begin{example}\label{E-3-3} 
Putting $(p,q,r,u,v)=(2,1,1,1,1)$ in \eqref{Fn-G2}, we have 
\begin{align*}
& \ \ \zeta_{2}(2,s,1,1,1,1;G_2)+\zeta_{2}(2,1,s,1,1,1;G_2)-\zeta_{2}(1,1,1,s,2,1;G_2)  \\
& +\zeta_{2}(1,1,1,s,1,2;G_2)  -\zeta_{2}(1,1,s,1,1,2;G_2)+\zeta_{2}(1,s,1,1,2,1;G_2) \notag\\
& -\frac{1}{9}\zeta(2)\zeta(s+4)+\frac{109}{648}\zeta(s+6)=0.
\end{align*}
Setting $s=1$, we obtain a special case of \eqref{e-3-2-1}, that is, 
\begin{align}\label{rei1}
\zeta_{2}(2,1,1,1,1,1;G_2)=\frac{1}{18}\zeta(2)\zeta(5)-\frac{109}{1296}\zeta(7),
\end{align}
which is \eqref{Zhao-1} noted in Section \ref{sec-1}. Similarly we can compute
\begin{align}
& \zeta_{2}(4,1,1,1,1,1;G_2) =\frac{1}{18}\zeta(4)\zeta(5)+\frac{145}{648}\zeta(2)\zeta(7)-\frac{19753}{46656}\zeta(9),\label{rei2}\\
& \zeta_2(2,1,1,1,2,2;G_2)=-\frac{187}{324}\zeta(2)\zeta(7)+\frac{11149}{11664}\zeta(9),\label{rei3}\\
&\zeta_{2}(4,2,2,1,1,1;G_2)=\frac{1}{18}\zeta(4)\zeta(7)+\frac{595}{648}\zeta(2)\zeta(9)-\frac{73201}{46656}\zeta(11),\label{rei4}\\
& \zeta_{2}(2,1,1,5,3,3;G_2) =\frac{5}{4}\zeta(4)\zeta(11)+\frac{1043857}{23328}\zeta(2)\zeta(13)-\frac{41971423}{559872}\zeta(15),\label{rei5}\\
& \zeta_2(4,2,2,1,4,4;G_2) =\frac{61441}{209952}\zeta(4)\zeta(13)+\frac{600677}{944784}\zeta(2)\zeta(15)-\frac{23172773}{17006112}\zeta(17),\label{rei6}\\
& \zeta_2(2,4,4,3,3,3;G_2) =\frac{1}{8}\zeta(4)\zeta(15)+\frac{281221}{23328}\zeta(2)\zeta(17)-\frac{11177971}{559872}\zeta(19).\label{rei7}
\end{align}
\end{example}

\section{The parity result for the zeta-function of $G_2$}\label{sec-final}

We conclude this paper with a discussion on the parity result for the zeta-function 
of $G_2$.

It is well-known that the double zeta values satisfy that 
$$\sum_{m,n=1}^\infty \frac{1}{m^p (m+n)^q}\in \mathbb{Q}[\{\zeta(j+1)\,|\,j\in \mathbb{N}\}],$$
for $p,q \in \mathbb{N}$ with $q\geq 2$ and $2\nmid(p+q)$, which was proved by Euler. The same situation holds for the zeta values of type $A_2$ (see \cite{To}) and type $B_2$ (see \cite{TsAr}):
\begin{align*}
\zeta_2(p,q,r;A_2), \ \zeta_2(t,u,v,w;B_2) \in \mathbb{Q}[\{\zeta(j+1)\,|\,j\in \mathbb{N}\}]
\end{align*}
for $p,q,r,t,u,v,w\in \mathbb{N}$ with $2\nmid(p+q+r)$ and $2\nmid(t+u+v+w)$. 
These may be regarded as examples of ``parity results''.
(In general, a ``parity result'' means a property that some multiple zeta value
whose weight and depth are of different parity can be written in terms of multiple zeta values
of lower depth.) 
Does the same type of assertion hold for
$\zeta_2(p,q,r,u,v,w;G_2)$?
It seems that the answer is negative; 
in view of Example \ref{Exam-i-1} (especially \eqref{VR-01}), we find that the 
following modified statement is more plausible:
\begin{align}\label{parity}
\zeta_2(p,q,r,u,v,w;G_2) \in 
\mathbb{Q}[\{\zeta(j+1),\ L(j,\chi_3)\,|\,j\in \mathbb{N}\}]
\end{align}
for $p,q,r,u,v,w\in \mathbb{N}$ with $2\nmid(p+q+r+u+v+w)$. 

In this direction, an interesting result was given in Okamoto's paper \cite{Ok}.
Inspired by the work of Nakamura \cite{Nak08b} and Onodera \cite{On},
Okamoto proved (his Theorems 2.3 and 4.5) that the values of certain generalized 
double zeta-functions, including the case
$\zeta_2(p,q,r,u,v,w;G_2)$ with $2\nmid(p+q+r+u+v+w)$, can be expressed in terms of
the Riemann zeta values and the values of Clausen-type functions, that is,
$$
S_r(x)=\sum_{m\geq 1}\frac{\sin(2\pi mx)}{m^r}\quad{\rm or} \quad
C_r(x)=\sum_{m\geq 1}\frac{\cos(2\pi mx)}{m^r}\qquad(r\in\mathbb{N})
$$
with $x=j/l\in\mathbb{Q}$  ($l\in \mathbb{N}$, $0\leq j<l$).
Moreover in his formula, in the case of $G_2$, only the cases $l=1,2,3,4,6$ and $12$ of 
Clausen-type functions appear.   For $l=1,2,3$ and $6$, the values 
$S_r(j/l)$ and $C_r(j/l)$ can be written in terms of
the values of $\zeta(s)$ and $L(s,\chi_3)$, similarly to \eqref{cosclausen} and
\eqref{sinclausen}.   Therefore, Okamoto's result implies that
{\it if} $\;2\nmid(p+q+r+u+v+w)$, {\it then the value} $\;\zeta_2(p,q,r,u,v,w;G_2)$
{\it can be written in terms of} $\;\zeta(s)$, $L(s,\chi_3)$, $S_r(j/l)$ {\it and}
$\;C_r(j/l)$ {\it for} $\;l=4, 12$ and $0< j<l$ {\it with} $(j,l)=1$. 
This may be regarded as a kind of ``generalized parity result''.

If we apply Okamoto's theorem directly, we obtain a rather long expression of
special values in terms of Clausen-type functions.   But we have checked, using
PARI/GP, that his expression actually agrees with our expression for
\eqref{VR-01}, \eqref{rei1}, \eqref{rei2}, \eqref{rei3} and \eqref{rei4}.
To check the other examples (\eqref{rei5}, \eqref{rei6}, \eqref{rei7}) we would
require much more running time, so we did not check them.

Though only the values of $\zeta(s)$, $L(s,\chi_3)$ appear in all of our
examples, it is not sure whether $S_r(j/l)$ or $C_r(j/l)$ ($l=4, 12$; $0< j<l,$ $(j,l)=1$) will appear
or not (in other words, \eqref{parity} would hold or not) in some other examples. 

It seems that for the zeta-function of $G_2$, the parity result is valid only in
this generalized form.
On the other hand, our Example \ref{E-3-3} shows that
sometimes the value $\zeta_2(p,q,r,u,v,w;G_2)$, with $2\nmid(p+q+r+u+v+w)$, can be 
expressed only by the values of $\zeta(s)$.   It is an interesting problem to
determine when such a restricted form of the parity result holds.

\bigskip

\baselineskip 14pt

\bibliographystyle{amsplain}

\ 

\end{document}